\numberwithin{equation}{section}
\numberwithin{figure}{section}
  \theoremstyle{plain}
  \newtheorem*{thm*}{\protect\theoremname}
  \theoremstyle{definition}
  \newtheorem{defn}{\protect\definitionname}[section]
  \theoremstyle{plain}
  \newtheorem{thm}{\protect\theoremname}[section]
\newenvironment{lyxcode}
{\par\begin{list}{}{
\setlength{\rightmargin}{\leftmargin}
\setlength{\listparindent}{0pt}
\raggedright
\setlength{\itemsep}{0pt}
\setlength{\parsep}{0pt}
\normalfont\ttfamily}%
 \item[]}
{\end{list}}
  \theoremstyle{plain}
  \newtheorem{lem}{\protect\lemmaname}[section]
  \theoremstyle{plain}
  \newtheorem{cor}{\protect\corollaryname}[section]
  \theoremstyle{remark}
  \newtheorem{rem}{\protect\remarkname}[section]
  \theoremstyle{plain}
  \newtheorem{prop}{\protect\propositionname}[section]
  \theoremstyle{definition}
  \newtheorem{example}{\protect\examplename}[section]
  \theoremstyle{remark}
  \newtheorem*{rem*}{\protect\remarkname}
   \providecommand{\fg}{\ifdim\lastskip>\z@\unskip\fi~\frqq}%
  \providecommand{\definitionname}{Definition}
  \providecommand{\examplename}{Example}
  \providecommand{\lemmaname}{Lemma}
  \providecommand{\propositionname}{Proposition}
  \providecommand{\remarkname}{Remark}
  \providecommand{\theoremname}{Theorem}
\providecommand{\corollaryname}{Corollary}
\providecommand{\theoremname}{Theorem}
\begin{document}

\title{Cyclic covers of affine $\mathbb{T}$-varieties}

\subjclass[2000]{14R05, 14L30}

\keywords{affine $\mathbb{T}$-varieties,
hyperbolic $\mathbb{C}^{*}$-actions,
Koras-Russell threefolds, cyclic
covers}

\author{Charlie Petitjean }

\address{Charlie Petitjean, Institut de
Mathématiques de Bourgogne, Université
de Bourgogne, 9 Avenue Alain Savary,
BP 47870, 21078 Dijon Cedex, France}

\email{charlie.petitjean@u-bourgogne.fr}
\begin{abstract}
We consider normal affine $\mathbb{T}$-varieties
$X$ endowed with an action of
finite abelian group $G$ commuting
with the action of $\mathbb{T}$.
For such varieties we establish
the existence of $G$-equivariant
geometrico-combinatorial presentations
in the sense of Altmann and Hausen.
As an application, we determine
explicit presentations of the Koras-Russell
threefolds as bi-cyclic covers
of $\mathbb{A}^{3}$ equipped with
a hyperbolic $\mathbb{C}^{*}$-action.
\end{abstract}
\maketitle

\section*{Introduction}

Every algebraic action of the one
dimensional torus $\mathbb{T}\simeq\mathbb{C}^{*}$
on a complex affine variety $X$
is determined by a $\mathbb{Z}$-grading
$A=\oplus_{m\in\mathbb{Z}}A_{m}$
of its coordinate ring $A$, the
spaces $A_{m}$ consisting of semi-invariant
regular functions of weight $m$
on $X$. One possible way to construct
$\mathbb{Z}$-graded algebras,
which was studied by Demazure \cite{D},
is to start with a variety $Y$
and a $\mathbb{Q}$-divisor $D$
on $Y$ and to let $A=\oplus_{m\in\mathbb{Z}}\Gamma(Y,\mathcal{O}_{Y}(mD))$.
For a well chosen pair $(Y,D)$,
this algebra is finitely generated,
corresponding to the ring of regular
functions of an affine variety,
$X=\mathbb{S}(Y,D)$ with a $\mathbb{C}^{*}$-action
whose algebraic quotient is isomorphic
to $\mathrm{Spec}(\Gamma(Y,\mathcal{O}_{Y}))$.
A slight variant of this construction
\cite{F-Z} already enabled a complete
description of $\mathbb{C}^{*}$-actions
on normal surfaces $X$: namely
they correspond to graded algebras
of the form: 
\[
A=\oplus_{m<0}\Gamma(Y,\mathcal{O}_{Y}(mD_{-}))\oplus\Gamma(Y,\mathcal{O}_{Y})\oplus_{m>0}\Gamma(Y,\mathcal{O}_{Y}(mD_{+})),
\]
 for suitably chosen triples $(Y,D_{+},D_{-})$
consisting of a smooth curve $Y$
and a pair of $\mathbb{Q}$-divisors
$D_{+}$ and $D_{-}$ on it.

Demazure's construction was generalized
by Altmann and Hausen \cite{A-H}
to give a description of all normal
affine varieties $X$ equipped
with an effective action of an
algebraic torus $\mathbb{T}\simeq(\mathbb{C}^{*})^{k}$,
$k\geq1$. Here the $\mathbb{Z}$-grading
is replaced by a grading by the
lattice $M\simeq\mathbb{Z}^{k}$
of characters of the torus, and
the graded pieces are recovered
from a datum consisting of a variety
$Y$ of dimension $\mathrm{dim}(X)-\mathrm{dim}(\mathbb{T})$
and a so-called polyhedral divisor
$\mathcal{D}$ on $Y$, a generalization
of $\mathbb{Q}$-divisors for higher
dimensional tori: $\mathcal{D}$
can be considered as a collection
of $\mathbb{Q}$-divisors $\mathcal{D}(u)$
parametrized by a ``weight cone
'' $\sigma^{\vee}\cap M$, for
which we have $A=\oplus_{u\in\sigma^{\vee}\cap M}\Gamma(Y,\mathcal{O}_{Y}(\mathcal{D}(u)))$.
The $\mathbb{T}$-variety associated
to a pair $(Y,\mathcal{D})$ is
denoted by $\mathbb{S}(Y,\mathcal{D})$. 

In this article, we consider affine
$\mathbb{T}$-varieties $X$ endowed
with an additional action of a
finite abelian group $G$ commuting
with the action of $\mathbb{T}$.
The quotient $X'=X/\!/G$ is again
an affine $\mathbb{T}'$-variety
for a torus $\mathbb{T}'\simeq\mathbb{T}$
obtained as a quotient of $\mathbb{T}$
by an apporpriate finite group,
and our aim is to understand the
relation between the presentations
$X=\mathbb{S}(Y,\mathcal{D})$
of $X$ and those of $X'=\mathbb{S}(Y',\mathcal{D}')$.
A pair $(Y,\mathcal{D})$ such
that $X=\mathbb{S}(Y,\mathcal{D})$
is not unique but we will show
that it is always possible to choose
a particular pair $(Y,\mathcal{D}_{G})$
consisting of a variety $Y$ endowed
with a $G$-action and a $G$-invariant
polyhedral divisor $\mathcal{D}_{G}$
such that $X$ is $G\times\mathbb{T}$
equivariantly isomorphic to $\mathbb{S}(Y,\mathcal{D}_{G})$.
The $G$-invariant divisor $\mathcal{D}_{G}$
corresponds in turn to a certain
polyhedral divisor $\mathcal{D}'$
on the quotient $Y/\!/G$ with
property that $X'=\mathbb{S}(Y/\!/G,\mathcal{D}')$
as a $\mathbb{T}'$-variety.

More precisely, our main result
reads as follows:
\begin{thm*}
Let $X$ be a $\mathbb{T}$-variety
and let $G$ be a finite abelian
group acting on $X$ such that
the two actions commute. Then the
following hold:

1) There exist a semi-projective
variety $Y$ endowed with an action
of $G$ and a $G$-invariant pp-divisor
$\mathcal{D}_{G}$ defined on $Y$
such that $X$ is $\mathbb{T}\times G$
equivariantly isomorphic to $\mathbb{S}(Y,\mathcal{D}_{G})$.

2) Moreover $X/\!/G$ is equivariantly
isomorphic to the $\mathbb{T}'$-variety
$\mathbb{S}(Y/\!/G,\mathcal{D}')$
where $\mathcal{D}'$ can be chosen
such that $F_{*}(\mathcal{D}_{G})=\varphi_{G}^{*}(\mathcal{D}')$,
where $\varphi_{G}:Y\rightarrow Y/\!/G$
denotes the quotient morphism and
$F:M^{\vee}\rightarrow M'^{\vee}$
is a linear map induced by the
inclusion between the character
lattices $M'$ of $\mathbb{T}'$
and $M$ of $\mathbb{T}$.
\end{thm*}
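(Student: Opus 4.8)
The plan is to proceed in three steps: first replace an arbitrary Altmann--Hausen presentation of $X$ by one carried by a model on which $G$ acts biregularly; then average the polyhedral divisor so that it becomes $G$-invariant; finally descend all the data through the finite quotient map $\varphi_{G}\colon Y\to Y/\!/G$. For the first step I start from any presentation $X\cong\mathbb{S}(Y_{0},\mathcal{D}_{0})$ provided by the theorem of Altmann and Hausen, where $Y_{0}$ is normal and projective over $X_{0}:=X/\!/\mathbb{T}=\mathrm{Spec}(A^{\mathbb{T}})$ with $\mathbb{C}(Y_{0})=\mathbb{C}(X)^{\mathbb{T}}=:K_{0}$. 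Since the two actions commute, $G$ preserves each weight space $A_{u}$, hence acts on $A^{\mathbb{T}}$, biregularly on $X_{0}$, and on the subfield $K_{0}\subset\mathbb{C}(X)$, realizing $G$ as a finite subgroup of the birational transformations of $Y_{0}$ over $X_{0}$. Regularizing this finite birational action in the standard way --- for instance letting $Y$ be the normalization of the closure of the image of $Y_{0}$ under the rational map $y\mapsto(\theta_{g}(y))_{g\in G}$ into $\prod_{g\in G}Y_{0}$, on which $G$ permutes the factors --- produces a normal variety $Y$, still projective over $X_{0}$ with $\mathbb{C}(Y)=K_{0}$, carrying a biregular $G$-action inducing the given one on $K_{0}$, together with a projective birational morphism $\psi\colon Y\to Y_{0}$. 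Because the Altmann--Hausen construction is unchanged under proper birational modifications of the base, $(Y,\psi^{*}\mathcal{D}_{0})$ still presents $X$; from here on I may assume that $G$ acts biregularly on $Y$ and $X\cong\mathbb{S}(Y,\mathcal{D}_{1})$ for some pp-divisor $\mathcal{D}_{1}=\sum_{Z}\Delta_{Z}\cdot Z$.

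The second step is where the real obstacle lies. Transporting the $G$-action to $\mathbb{S}(Y,\mathcal{D}_{1})$, it acts by automorphisms commuting with $\mathbb{T}$. Choosing, for $u$ in a finite multiplicative generating set of the weight monoid $\sigma^{\vee}\cap M$, semi-invariants $\chi^{u}\in\mathbb{C}(X)_{u}$ with $A_{u}=\chi^{u}\cdot\Gamma(Y,\mathcal{O}_{Y}(\mathcal{D}_{1}(u)))$, one gets $g\cdot\chi^{u}=f_{g}(u)\,\chi^{u}$ with $f_{g}(u)\in K_{0}^{*}$, and $u\mapsto[f_{\bullet}(u)]$ is a homomorphism into $H^{1}(G,K_{0}^{*})$. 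As $K_{0}/K_{0}^{G}$ is a finite Galois extension by Artin's theorem, Hilbert's Theorem~90 lets me, after replacing each $\chi^{u}$ by $h_{u}\chi^{u}$ for suitable $h_{u}\in K_{0}^{*}$, reduce this cocycle to one measured only by a character $\lambda\colon G\to\mathbb{T}$, which I carry along as a linearization of the $G$-action (harmless since it commutes with $\mathbb{T}$). After this normalization, the requirement that $s\mapsto f_{g}(u)\,\theta_{g}^{*}(s)$ preserve $\Gamma(Y,\mathcal{O}_{Y}(\mathcal{D}_{1}(u)))$ forces $\theta_{g}^{*}\mathcal{D}_{1}=\mathcal{D}_{1}$ for all $g$; so $\mathcal{D}_{G}:=\mathcal{D}_{1}$ is $G$-invariant, and $\mathbb{S}(Y,\mathcal{D}_{G})$ equipped with the $G$-action assembled from $(\theta_{g})_{g}$ and $\lambda$ is $\mathbb{T}\times G$-equivariantly isomorphic to $X$, which is assertion 1). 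I expect this upgrade of the pp-divisor to a genuinely $G$-invariant one to be the delicate heart of the argument, the two cited inputs (regularization of a finite birational action, Hilbert~90 plus the bookkeeping of $\lambda$) doing the essential work; the rest is formal.

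For 2), write $X/\!/G=\mathrm{Spec}(A^{G})$ with $A^{G}=\bigoplus_{u}(A_{u})^{G}$, graded by the sublattice $M'\subseteq M$ generated by the weights occurring in $A^{G}$; this is the character lattice of the torus $\mathbb{T}'\cong\mathbb{T}$ acting effectively on $X/\!/G$, and dualizing the inclusion $M'\hookrightarrow M$ yields the linear map $F\colon N_{\mathbb{Q}}=M^{\vee}_{\mathbb{Q}}\to N'_{\mathbb{Q}}$, an isomorphism over $\mathbb{Q}$. Let $\varphi_{G}\colon Y\to Y':=Y/\!/G$ be the quotient and, for a prime divisor $Z'$ of $Y'$, let $e_{Z'}$ be the common ramification index over it, so $\varphi_{G}^{*}Z'=e_{Z'}\sum_{Z\mapsto Z'}Z$; since $\mathcal{D}_{G}$ is $G$-invariant the polyhedron $\Delta_{Z}$ attached to a prime $Z$ over $Z'$ depends only on $Z'$. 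I then \emph{define} $\mathcal{D}':=\sum_{Z'}\tfrac{1}{e_{Z'}}F(\Delta_{Z})\cdot Z'$, which is the unique polyhedral divisor with $\varphi_{G}^{*}(\mathcal{D}')=F_{*}(\mathcal{D}_{G})$, has tail cone $F(\sigma)$, and is checked to be a pp-divisor on the semi-projective $Y'$. To conclude I compare graded pieces: for $u'\in M'$ with $(A_{u'})^{G}\neq 0$, choosing $\chi^{u'}\in\mathbb{C}(X/\!/G)_{u'}=\mathbb{C}(X)_{u'}^{G}$ gives $(A_{u'})^{G}=\chi^{u'}\cdot\Gamma(Y,\mathcal{O}_{Y}(\mathcal{D}_{G}(u')))^{G}$; and since the coefficient of $Z$ in $\mathcal{D}_{G}(u')$ equals $\min_{v\in\Delta_{Z}}\langle u',v\rangle$ while $\langle u',F(v)\rangle=\langle u',v\rangle$, a $G$-invariant $s\in K_{0}$ satisfies $\mathrm{div}_{Y}(s)+\mathcal{D}_{G}(u')\geq 0$ exactly when $s\in\mathbb{C}(Y')$ and $\mathrm{div}_{Y'}(s)+\mathcal{D}'(u')\geq 0$, using $\varphi_{G}^{*}\mathrm{div}_{Y'}(s)=\sum_{Z'}e_{Z'}\,\mathrm{ord}_{Z'}(s)\sum_{Z\mapsto Z'}Z$. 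Hence $\Gamma(Y,\mathcal{O}_{Y}(\mathcal{D}_{G}(u')))^{G}=\Gamma(Y',\mathcal{O}_{Y'}(\mathcal{D}'(u')))$, and summing over the weight monoid of $X/\!/G$ identifies $\mathbb{S}(Y',\mathcal{D}')$ with $\mathrm{Spec}\bigoplus_{u'}(A_{u'})^{G}=X/\!/G$ as $\mathbb{T}'$-varieties, finite generation and normality being automatic because $A$ is a finitely generated normal domain and $G$ is finite. This proves 2), with the understanding that the only genuinely technical point, as noted, is the $G$-invariance achieved in the second step.
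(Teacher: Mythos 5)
Your proposal is correct in substance and follows the same three-beat strategy as the paper --- equip a base $Y$ with a $G$-action, replace the pp-divisor by a $G$-invariant one in its equivalence class, then descend through $\varphi_{G}$ and the lattice map $F$ --- but each beat is executed differently. For the first step the paper does not regularize a birational action on an arbitrary model: it takes the canonical $Y$ of \ref{-the-semi-projective}, built from the GIT quotients of the semistable loci $X^{ss}(u)$, which are $G$-stable because the actions commute, so $G$ acts on $Y$ for free (Corollary \ref{G Y} via Lemma \ref{normalization}); your graph-closure regularization followed by pulling back $\mathcal{D}_{0}$ along the projective birational $\psi$ is a legitimate substitute, at the cost of invoking \cite[Corollary 8.12]{A-H}-type invariance of $\mathbb{S}(\cdot)$ under such pullbacks. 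For the second step the paper avoids Hilbert 90 entirely: since $G$ is finite abelian and each $A_{u}$ is $G$-stable, one simply picks the numerator and denominator of $h(u)$ inside $G$-eigenspaces, so $h(u)$ is $G$-semi-invariant from the outset (Lemma \ref{G inv divisor}); your cocycle argument in $H^{1}(G,K_{0}^{*})$ proves the same normalization but is heavier than necessary. For the third step the paper factors the quotient: first the subgroup $G_{0}$ of elements of $G$ acting through $\mathbb{T}$, which acts trivially on $Y$ and only shrinks the lattice (Lemma \ref{G in T}, producing $F_{*}$), then the residual effective action, which only changes the base (Lemma \ref{Effective}, producing $\varphi_{G}^{*}$); you do both at once, and your explicit formula $\mathcal{D}'=\sum_{Z'}\tfrac{1}{e_{Z'}}F(\Delta_{Z})\cdot Z'$ is in fact more detailed than the paper about why $\mathcal{D}'$ exists.

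Two points in your write-up are loose and should be tightened, though neither is a conceptual gap. First, replacing $\chi^{u}$ by $h_{u}\chi^{u}$ changes the divisor cut out by the relation $A_{u}=\chi^{u}\cdot\Gamma(Y,\mathcal{O}_{Y}(\mathcal{D}(u)))$, so the $G$-invariant divisor is not literally $\mathcal{D}_{1}$ but $\mathcal{D}_{1}$ corrected by the plurifunction recording the $h_{u}$; relatedly, invariance of $\mathcal{D}_{G}(u)$ is not \emph{forced} by preservation of the space of global sections (distinct divisors can share their sections) --- it follows because the local equations $\chi^{u}/s$, $s\in A_{u}$, are permuted by $G$ up to scalars. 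Second, in part 2 you silently re-choose $\chi^{u'}$ to be $G$-invariant for $u'\in M'$; to obtain the exact identity $F_{*}(\mathcal{D}_{G})=\varphi_{G}^{*}(\mathcal{D}')$, rather than an identity up to linear equivalence, the choices in parts 1 and 2 must be made coherently on the sublattice $M'$, which is essentially what the paper's factorization through $X/\!/G_{0}$ arranges. Both are repairable bookkeeping issues.
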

We then apply this result to determine
presentations of a family of exotic
affine spaces of dimension $3$
with hyperbolic $\mathbb{C}^{*}$-actions:
the Koras-Russell threefolds. We
exploit the fact that these threefolds
arise as equivariant bi-cyclic
cover of the affine space $\mathbb{A}^{3}$
equipped with a hyperbolic $\mathbb{C}^{*}$-action.

The article is organized as follows.
The first section is devoted to
a short recollection on Altmann-Hausen
representations, with a particular
focus on the methods to construct
pairs $(Y,\mathcal{D})$ corresponding
to a given graded algebra. The
main theorem above is then established
in section two. Finally, explicit
Altmann-Hausen representations
of the Koras-Russell threefolds
are determined in section three.

\section{Recollection on the Altmann-Hausen
representation}

In this section, we introduce the
correspondence between normal affine
$\mathbb{T}$-varieties $X$ and
pairs $(Y,\mathcal{D})$ composed
of a normal semi-projective variety
$Y$ and a so-called polyhedral
divisor $\mathcal{D}$ established
by Altmann-Hausen \cite{A-H}.
In particular, for a given $X$,
we summarize a construction of
a corresponding $Y$ and explain
a method to determine a possible
$\mathcal{D}$.

\subsection{Normal affine $\mathbb{T}$-varieties}

Let $N\simeq\mathbb{Z}^{k}$ be
a lattice of rank $k$ and let
$M=\mathrm{Hom}(N,\mathbb{Z})$
be its dual. A pointed convex polyhedral
cone $\sigma\subseteq N_{\mathbb{Q}}=N\otimes_{\mathbb{Z}}\mathbb{Q}$
is an intersection of finitely
many closed linear half spaces
in $N_{\mathbb{Q}}$ which does
not contain any line. Its dual:
\[
\sigma^{\vee}:=\{v\in M_{\mathbb{Q}}\mid\forall u\in\sigma\,\left\langle u,v\right\rangle \geq0\}\subseteq M_{\mathbb{Q}}=M\otimes_{\mathbb{Z}}\mathbb{Q},
\]
 consists of all linear forms on
$M_{\mathbb{Q}}$ that are non-negative
on $\sigma$. A polytope $\Pi\subset N_{\mathbb{Q}}$
is the convex hull of finitely
many points in $N_{\mathbb{Q}}$,
and a convex polyhedron $\Delta\subseteq N_{\mathbb{Q}}$
is the intersection of finitely
many closed affine half spaces
in $N_{\mathbb{Q}}$. Every polyhedron
admits a decomposition: $\Delta=\Pi_{\Delta}+\sigma$,
where $\Pi_{\Delta}$ is a polytope
and $\sigma$ is a pointed convex
polyhedral cone, called the tail
cone of $\Delta$. The set of all
polyhedra which admit the same
tail cone is a semigroup with Minkowski
addition, which we denote by $\mathrm{Pol}_{\sigma}^{+}(N_{\mathbb{Q}})$.
\begin{defn}
A $\sigma$-\emph{tailed polyhedral
divisor} $\mathcal{D}$ on an algebraic
variety $Y$ is a formal finite
sum

\[
\mathcal{D}=\sum\Delta_{i}\otimes D_{i}\in\mathrm{Pol}_{\sigma}^{+}(N_{\mathbb{Q}})\otimes_{\mathbb{Z}}\mathrm{WDiv}(Y),
\]

\noindent \begin{flushleft}
where $D_{i}$ are prime divisors
on $Y$ and $\Delta_{i}$ are $\sigma$-polyhedra.
\par\end{flushleft}
\end{defn}
Every element $u\in\sigma^{\vee}\cap M$
determines a map $\mathrm{Pol}_{\sigma}^{+}(N_{\mathbb{Q}})\otimes_{\mathbb{Z}}\mathrm{WDiv}(Y)\rightarrow\mathbb{Q}\otimes_{\mathbb{Z}}\mathrm{WDiv}(Y)$
which associates to $\mathcal{D}=\sum\Delta_{i}\otimes D_{i}$
the Weil $\mathbb{Q}$-divisor
${\displaystyle \mathcal{D}(u)=\sum\underset{v\in\Delta_{i}}{\mathrm{min}}\left\langle u,v\right\rangle D_{i}}$
on $Y$. 

Given a Weil $\mathbb{Q}$-divisor
$D$ and a section $s\in\Gamma(Y,\mathcal{O}_{Y}(D))$,
that is, an effective Weil divisor
$D'$ linearly equivalent to the
round-down $\lfloor D\rfloor$
of $D$, we denote by $Y_{s}$
the open subset $Y\setminus\mathrm{Supp}(D')$
of $Y$ .
\begin{defn}
(\cite[Definition 2.5 and 2.7]{A-H})
A \emph{proper-polyhedral divisor},
noted pp-divisor, is a polyhedral
divisor $\mathcal{D}=\sum\Delta_{i}\otimes D_{i}$
on $Y$ which satisfies the following
properties:

1) Each $D_{i}$ is an effective
divisor and $\mathcal{D}(u)$ is
a $\mathbb{Q}$-Cartier divisor
on $Y$ for every $u\in\sigma^{\vee}\cap M$
.

2) $\mathcal{D}(u)$ is semi-ample
for each $u\in\sigma^{\vee}\cap M$,
that is, for some $n\in\mathbb{Z}_{>0}$
the open subsets $Y_{s}$ , where
$s\in\Gamma(Y,\mathcal{O}_{Y}(\mathcal{D}(nu)))$,
cover $Y$. 

3) $\mathcal{D}(u)$ is big for
each $u\in\mathrm{relint}(\sigma^{\vee})\cap M$,
that is, for some $n\in\mathbb{Z}_{>0}$
there exist a section $s\in\Gamma(Y,\mathcal{O}_{Y}(\mathcal{D}(nu)))$
such that $Y_{s}$ is affine. 
\end{defn}
Recall \cite[Definition 2.1]{A-H}
that a variety $Y$ is said to
be semi-projective if $\Gamma(Y,\mathcal{O}_{Y})$
is finitely generated and $Y$
is projective over $Y_{0}=\mathrm{Spec}(\Gamma(Y,\mathcal{O}_{Y}))$.
Given a pp-divisor $\mathcal{D}$
on $Y$, the graded algebra 
\[
A=\underset{u\in\sigma^{\vee}\cap M}{\bigoplus}A_{u}=\underset{u\in\sigma^{\vee}\cap M}{\bigoplus}\Gamma(Y,\mathcal{O}_{Y}(\mathcal{D}(u))).
\]
 is finitely generated, and $\mathrm{Spec}(A)$
is a $\mathbb{T}$-variety for
$\mathbb{T}=\mathrm{Spec}(\mathbb{C}[M])\simeq(\mathbb{C}^{*})^{k}$.
More precisely Altmann and Hausen,
showed the following:
\begin{thm}
\cite{A-H} For any pp-divisor
$\mathcal{D}$ on a normal semi-projective
variety $Y$, the scheme

\[
\mathbb{S}(Y,\mathcal{D})=\mathrm{Spec}(\underset{u\in\sigma^{\vee}\cap M}{\bigoplus}\Gamma(Y,\mathcal{O}_{Y}(\mathcal{D}(u))))
\]
 is a normal affine $\mathbb{T}$-variety
of dimension $\mathrm{dim}(Y)+\mathrm{dim}(\mathbb{T})$.
Conversely any normal affine $\mathbb{T}$-variety
is isomorphic to an $\mathbb{S}(Y,\mathcal{D})$
for suitable $Y$ and $\mathcal{D}$.
\end{thm}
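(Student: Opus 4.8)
The plan is to prove the two assertions separately, first that the construction $\mathbb{S}(Y,\mathcal{D})$ has the stated properties and then the converse. For the forward direction I would begin by checking that $A=\bigoplus_{u\in\sigma^{\vee}\cap M}\Gamma(Y,\mathcal{O}_{Y}(\mathcal{D}(u)))$ is genuinely an $M$-graded $\mathbb{C}$-algebra: identifying each graded piece with a subspace $A_{u}=\{f\in K(Y)\mid\mathrm{div}(f)+\mathcal{D}(u)\geq0\}\cdot\chi^{u}$ of $K(Y)\otimes\mathbb{C}[M]$, multiplication is well defined precisely because $\mathcal{D}$ is superadditive, $\mathcal{D}(u)+\mathcal{D}(u')\leq\mathcal{D}(u+u')$, which itself follows from $\min_{v\in\Delta_{i}}\langle u,v\rangle+\min_{v\in\Delta_{i}}\langle u',v\rangle\le\min_{v\in\Delta_{i}}\langle u+u',v\rangle$ on each coefficient polyhedron. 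The essential point is finite generation of $A$: using Gordan's lemma one writes $\sigma^{\vee}\cap M=\langle u_{1},\dots,u_{r}\rangle$, clears denominators by picking $n$ with all $n\mathcal{D}(u_{j})$ integral and Cartier, and invokes that a semi-ample Cartier divisor has a finitely generated section ring; a diagonal argument on the Rees-type algebra of $\mathcal{D}$ over $\Gamma(Y,\mathcal{O}_{Y})$ then promotes this to finite generation of the whole multigraded $A$. Hence $X=\mathrm{Spec}(A)$ is affine, and the $M$-grading defines an effective action of $\mathbb{T}=\mathrm{Spec}(\mathbb{C}[M])$.

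For normality and dimension I would argue as follows. Normality descends from the normality of $Y$: an element of $\mathrm{Frac}(A)$ integral over $A$ decomposes, after passing to the $M$-graded total quotient ring, into homogeneous pieces $f\chi^{u}$, and integral dependence forces $\mathrm{ord}_{D}(f)$ to be at least the coefficient of the prime divisor $D$ in $-\mathcal{D}(u)$ for every $D$ on $Y$, so that $f\chi^{u}\in A_{u}$ already. For the dimension, the bigness condition (3) furnishes $u_{0}\in\mathrm{relint}(\sigma^{\vee})\cap M$ and a section $s$ with $Y_{s}$ affine; on the affine open $U=Y_{s}$ all the divisors $\mathcal{D}(u)|_{U}$ become linearly trivial, so a localization of $A$ is isomorphic to $\Gamma(U,\mathcal{O}_{U})[M]$, an open dense $\mathbb{T}$-stable subset of $X$, giving $\dim X=\dim U+\mathrm{rank}(M)=\dim Y+\dim\mathbb{T}$.

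For the converse, let $X$ be normal affine with an effective $\mathbb{T}$-action, $A=\Gamma(X,\mathcal{O}_{X})=\bigoplus_{u\in M}A_{u}$, with weight monoid $S=\{u\mid A_{u}\neq0\}$, weight cone $\omega=\mathrm{cone}(S)$, and $\sigma=\omega^{\vee}$, which is pointed since the action is effective and $X$ is affine. The core of the argument is to produce $Y$: I would take $Y$ to be the normalization of a suitable irreducible component of the Chow quotient of $X$ by $\mathbb{T}$, i.e. of the inverse limit of the GIT-type quotients $\mathrm{Proj}\big(\bigoplus_{m\geq0}A_{mu}\big)$ over $u\in\sigma^{\vee}\cap M$, and check that this is semi-projective of dimension $\dim X-\dim\mathbb{T}$ and receives a dominant rational map from $X$. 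Every nonzero $f\in A_{u}$ is then a rational function on $Y$, and for each prime divisor $D_{i}\subset Y$ I would set $\Delta_{i}=\{v\in N_{\mathbb{Q}}\mid\langle u,v\rangle\geq-\min_{f\in A_{u}\setminus0}\mathrm{ord}_{D_{i}}(f)\ \text{for all }u\in\sigma^{\vee}\cap M\}$; one verifies that each $\Delta_{i}$ is a $\sigma$-polyhedron, that $\Delta_{i}=\sigma$ for all but finitely many $i$, and that the resulting $\mathcal{D}=\sum\Delta_{i}\otimes D_{i}$ is a pp-divisor with $\Gamma(Y,\mathcal{O}_{Y}(\mathcal{D}(u)))=A_{u}$ for all $u$, hence $\mathbb{S}(Y,\mathcal{D})\cong X$ $\mathbb{T}$-equivariantly.

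The genuinely hard step is the construction of $Y$ in the converse direction: one must exhibit a single normal semi-projective variety that simultaneously dominates all the partial quotients, has the correct dimension, and on which the divisorial data above organize into a \emph{proper} polyhedral divisor — indeed the three conditions in the definition of a pp-divisor translate exactly into semi-projectivity of $Y$ together with the semi-ampleness and bigness requirements, so getting $Y$ right is what makes the whole correspondence work. The finite-generation statement in the forward direction is the second delicate point, because it has to exploit the semi-ampleness, bigness and $\mathbb{Q}$-Cartier conditions in tandem rather than one at a time.
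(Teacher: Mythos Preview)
The paper does not prove this theorem: it is stated with a citation to \cite{A-H} and used as background, with no proof supplied. Section~\ref{-the-semi-projective} of the paper does summarize the construction of $Y$ from \cite[section~6]{A-H}, and that summary matches your converse-direction outline (the inverse limit of the GIT quotients $Y_{u}=\mathrm{Proj}_{A_{0}}(\bigoplus_{n\geq0}A_{nu})$, followed by normalization of the closure of the image of $W$), so in that limited sense your sketch is aligned with what the paper records.

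As a standalone sketch of the Altmann--Hausen argument your outline is broadly reasonable, but one point in the converse direction is imprecise: you write that every nonzero $f\in A_{u}$ ``is then a rational function on $Y$'', whereas in fact $A_{u}$ sits inside $K(Y)$ only after choosing a homomorphism $u\mapsto h(u)$ from $M$ to $\mathrm{Frac}(A)^{*}$ sending $u$ to a semi-invariant of weight $u$ (this is exactly the step recalled in the paper's proof of Lemma~\ref{G inv divisor}); the divisor $\mathcal{D}(u)$ is then determined by $A_{u}=h(u)\cdot\Gamma(Y,\mathcal{O}_{Y}(\mathcal{D}(u)))$, not by the raw orders of vanishing of elements of $A_{u}$. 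Your polyhedron formula for $\Delta_{i}$ implicitly assumes such a trivialization has been fixed, and without it the recipe is not well defined.
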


\subsection{\label{-the-semi-projective}Determining
the semi-projective variety}

The semi-projective variety $Y$
is not unique, however there exists
a natural construction, which we
will use in the remainder of the
article. It can be summarized as
follows (\cite[section 6]{A-H}).
\\

Let $X=\mathrm{Spec}(\underset{u\in M}{\bigoplus}A_{u})$
be an affine variety endowed with
an effective action of the torus
$\mathbb{T}=\mathrm{Spec}(\mathbb{C}[M])$.
For each $u\in M$ the set of semistable
points 
\[
X^{ss}(u):=\left\{ x\in X\:/\:\exists n\in\mathbb{Z}_{\geq0}\: and\: f\in A_{nu}\: such\: that\: f(x)\neq0\right\} 
\]

\noindent \begin{flushleft}
is an open $\mathbb{T}$-invariant
subset of $X$ which admits a good
$\mathbb{T}$-quotient
\par\end{flushleft}

\[
Y_{u}=X^{ss}(u)/\!/\mathbb{T}=\mathrm{Proj}_{A_{0}}(\underset{n\in\mathbb{Z}_{\geq0}}{\bigoplus}A_{nu}).
\]

Following \cite[section 6]{A-H},
there exists a fan $\Lambda\in M_{\mathbb{Q}}$
generated by a finite collection
of cones $\lambda$ such that the
following holds:

1) For any $u$ and $u'$ in the
relative interior of $\lambda$,
$X^{ss}(u)=X^{ss}(u')$. We denote
$W_{\lambda}=X^{ss}(u)$ for any
$u\in relint(\lambda)$ 

2) If $\gamma$ is a face of $\lambda$,
$W_{\lambda}$ is an open subset
of $W_{\gamma}$. Let $W=\underset{\lambda\in\Lambda}{\cap}W_{\lambda}=\underset{\leftarrow}{lim}W_{\lambda}$.

The quotient maps $q_{\lambda}:W_{\lambda}\rightarrow W_{\lambda}/\!/\mathbb{T}$
form an inverse system indexed
by the cones in $\Lambda$, whose
inverse limit exist as a morphism
$q:W\longrightarrow Z=\underset{\leftarrow}{lim}Y_{\lambda}$.
The desired semi-projective variety
$Y$ is the normalization of the
closure of the image of $W$ by
$q$. 
\selectlanguage{french}%
\begin{lyxcode}
\[
\xymatrix{W\ar[r]\ar[d]_{q} & W_{\lambda}\ar[r]\ar[d]_{q_{\lambda}} & W_{\gamma}\ar[r]\ar[d]_{q_{\gamma}} & X\ar[dd]_{q_{0}}\\
Z\ar[drrr]\ar[r] & Y_{\lambda}\ar[drr]\ar[r] & Y_{\gamma}\ar[dr]\\
 &  &  & Y_{0}=\mathrm{Spec}(A_{0})
}
\]

\end{lyxcode}
\selectlanguage{american}%

\subsection{\label{sub:Maps-of-proper}Maps
of proper polyhedral divisor.}

Let $Y$ and $Y'$ be normal semi-projective
varieties, $N$ and $N'$ be lattices
and $\sigma\subset N_{\mathbb{Q}}$,
$\sigma'\subset N'_{\mathbb{Q}}$
be pointed cones. Let $\mathcal{D}=\sum\Delta_{i}\otimes D_{i}$
and $\mathcal{D}'=\sum\Delta'_{i}\otimes D'_{i}$
be pp-divisors on $Y$ and $Y'$
respectively with corresponding
tail cones $\sigma$ and $\sigma'$.
\begin{defn}
\cite[Definition 8.3 ]{A-H}1)
For a morphism $\varphi:Y\rightarrow Y'$
such that $\varphi(Y)$ is not
contained in $\mathrm{Supp}(D'_{i})$
for any $i$, the polyhedral pull-back
of $\mathcal{D}'$ is defined by
:

\[
\varphi^{*}(\mathcal{D}'):=\sum\Delta'_{i}\otimes\varphi^{*}(D'_{i})
\]

\noindent \begin{flushleft}
Where $\varphi^{*}(D'_{i})$ is
the usual pull-back of $D'_{i}$.
It is a polyhedral divisor on $Y$
with tail cone $\sigma'$.
\par\end{flushleft}

2) For a linear map $F:N\rightarrow N'$
such that $F(\sigma)\subset\sigma'$,
the polyhedral push forward is
defined as :

\[
F_{*}(\mathcal{D}):=\sum(F(\Delta_{i})+\sigma')\otimes D_{i}
\]

\noindent \begin{flushleft}
It is also a polyhedral divisor
on $Y$ with tail cone $\sigma'$.
\par\end{flushleft}
\end{defn}
An equivariant morphism from $\mathbb{S}(Y,\mathcal{D})$
to $\mathbb{S}(Y',\mathcal{D}')$
is given by a homomorphism of algebraic
groups $\psi:\mathbb{T}\rightarrow\mathbb{T}'$
and a morphism $\phi:\mathbb{S}(Y,\mathcal{D})\rightarrow\mathbb{S}(Y',\mathcal{D}')$
satisfying $\phi(\lambda.x)=\psi(\lambda).\phi(x)$.
Every such morphism is uniquely
determined by a triple $(\varphi,F,f)$
defined as above consisting of
a dominant morphism $\varphi:Y\rightarrow Y'$,
a linear map $F:N\rightarrow N'$
as above and a \emph{plurifunction}
$f\in N'\otimes_{\mathbb{Z}}\mathbb{C}(Y)^{*}$
such that :

\[
\varphi^{*}(\mathcal{D}')\leq F_{*}(\mathcal{D})+\mathrm{div}(f).
\]
The identity map of a pp-divisor
is the triple $(\mathrm{id},\mathrm{id},1)$
and the composition of two maps
$(\varphi,F,f)$ and $(\varphi',F',f')$
is $(\varphi'\circ\varphi,F'\circ F,F'_{*}(f).\varphi^{*}(f'))$.

\subsection{\label{sub:Determining-proper-polyhedral}Determining
proper polyhedral divisors}

A method to determine a possible
pp-divisor $\mathcal{D}$ (\cite[section 11]{A-H})
associated to a $\mathbb{T}$-variety
$X$ with $\mathbb{T}=(\mathbb{C}^{*})^{k}$
is to embed $X$ as a $\mathbb{T}$-stable
subvariety of a toric variety.
The calculation is then reduced
to the toric case by considering
an embedding in $\mathbb{A}^{m}$
with linear action for $m$ sufficiently
large. In other words, $X$ is
realized as a $(\mathbb{C}^{*})^{k}$-stable
subvariety of a $(\mathbb{C}^{*})^{m}$-toric
variety. The inclusion of $(\mathbb{C}^{*})^{k}$
corresponds to an inclusion of
the lattice of characters $\mathbb{Z}^{k}$
of $\mathbb{T}$ into $\mathbb{Z}^{m}$.
We obtain the exact sequence: 
\[
\xymatrix{0\ar[r] & \mathbb{Z}^{k}\ar[r]_{F} & \mathbb{Z}^{m}\ar[r]_{P}\ar@/_{1pc}/[l]_{s} & \mathbb{Z}^{m}/\mathbb{Z}^{k}\ar[r] & 0}
,
\]
 where $F$ is given by the action
of $(\mathbb{C}^{*})^{k}$ on $\mathbb{A}^{m}$
and $s$ is a section of $F$.
The $(\mathbb{C}^{*})^{m}$-toric
variety is determined by the first
integral vectors $v_{i}$ of the
unidimensional cone generated by
the i-th column vector of $P$
as rays in a $\mathbb{Z}^{m}$
lattice, and each $v_{i}$ correspond
to a divisor. The support of $D_{i}$
is the intersection between $X$
and the divisor corresponding to
$v_{i}$. The tail cone is $\sigma:=s(\mathbb{Q}_{\geq0}^{m}\cap F(\mathbb{Q}))$,
and the polytopes are $\Pi_{i}=s(\mathbb{R}_{\geq0}^{m}\cap P^{-1}(v_{i}))$.

\section{Actions of finite abelian groups}

Let $X=\mathrm{Spec}(A)$ be a
normal affine variety with an effective
action of a torus $\mathbb{T}$
and let $G$ be a finite abelian
group of order $d\geq2$ whose
action on $X$ commutes with that
of $\mathbb{T}$. The goal of this
section is to determine the relationship
between the Altmann-Hausen representations
of $X$ and those of $X/\!/G=\mathrm{Spec}(A^{G})$.
\\

Let $Y$ be a semi-projective variety
equipped with an action $\psi:G\times Y\overset{}{\rightarrow}Y$
of an algebraic group $G.$ If
$\mathcal{D}_{G}$ is a $G$-invariant
pp-divisor, i.e a pp-divisor such
that $\psi(g,\cdot)^{*}\mathcal{D}_{G}=\mathcal{D}_{G}$
for every $g\in G$, then for every
$u\in\sigma^{\vee}\cap M$ the
space $A_{u}=\Gamma(Y,\mathcal{O}(\mathcal{D}_{G}(u)))$
of global sections $\mathcal{O}(\mathcal{D}_{G}(u))$
is endowed with a $G$-action.
It follows that $\mathbb{S}(Y,\mathcal{D}_{G})=\mathrm{Spec}(\underset{u\in\sigma^{\vee}\cap M}{\bigoplus}\Gamma(Y,\mathcal{O}_{Y}(\mathcal{D}(u))))$
admits an action of $G$ commuting
with that of $\mathbb{T}$.
\begin{thm}
\label{thm:Main_TH} Let $X$ be
a $\mathbb{T}$-variety and let
$G$ be a finite abelian group
acting on $X$ such that the two
actions commute. Then the following
hold: 

1) There exist a semi-projective
variety $Y$ endowed with an action
of $G$ and a $G$-invariant pp-divisor
$\mathcal{D}_{G}$ on $Y$ such
that $X$ is $\mathbb{T}\times G$
equivariantly isomorphic to $\mathbb{S}(Y,\mathcal{D}_{G})$.

2) Moreover $X/\!/G$ is equivariantly
isomorphic to the $\mathbb{T}'$-variety
$\mathbb{S}(Y/\!/G,\mathcal{D}')$
where $\mathcal{D}'$ can be chosen
such that $F_{*}(\mathcal{D}_{G})=\varphi_{G}^{*}(\mathcal{D}')$,
where $\varphi_{G}:Y\rightarrow Y/\!/G$
denotes the quotient morphism and
$F:M^{\vee}\rightarrow M'^{\vee}$
is a linear map induced by the
inclusion between the character
lattices $M'$ of $\mathbb{T}'$
and $M$ of $\mathbb{T}$ (see
\ref{G in T} ).
\end{thm}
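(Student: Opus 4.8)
The plan is to exploit the explicit construction of the semi-projective variety $Y$ recalled in subsection~\ref{-the-semi-projective} and show that it carries a natural $G$-action, then to follow through the induced structure under the quotient by $G$.

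First I would treat part 1). Since the action of $G$ on $X=\mathrm{Spec}(A)$ commutes with the $\mathbb{T}$-action, $G$ acts on each graded piece $A_u$, hence $G$ permutes the sets of semistable points $X^{ss}(u)$ and is compatible with the good quotients $Y_u=X^{ss}(u)/\!/\mathbb{T}=\mathrm{Proj}_{A_0}(\bigoplus_{n\geq 0}A_{nu})$. Concretely, if $g\in G$ and $f\in A_{nu}$ then $g\cdot f\in A_{nu}$ as well, so $g$ preserves the condition defining $X^{ss}(u)$; moreover since the $\mathbb{T}$- and $G$-actions commute, $g$ descends to an automorphism of $Y_u$. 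The combinatorial data (the fan $\Lambda\subset M_{\mathbb Q}$, the system of open sets $W_\lambda$, the inverse limits $W$ and $Z$) are built purely from the $\mathbb{T}$-module structure of $A$, so $G$ acts compatibly on the whole diagram; in particular $G$ acts on $W$, on $Z$, and hence on $Y$, the normalization of the closure of $q(W)$ in $Z$, by functoriality of normalization. This gives the $G$-action on $Y$. For the pp-divisor I would invoke the fact that the Altmann--Hausen correspondence is an (anti-)equivalence of categories (subsection~\ref{sub:Maps-of-proper}): the action of each $g\in G$ on $X=\mathbb{S}(Y,\mathcal D)$, being $\mathbb{T}$-equivariant with $\psi=\mathrm{id}_{\mathbb T}$, is encoded by a triple $(\varphi_g,\mathrm{id}_N,f_g)$, and since $Y$ is constructed canonically from $X$ the morphism $\varphi_g$ is precisely the automorphism of $Y$ just described. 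Averaging or transporting $\mathcal D$ along this finite group action, one replaces $\mathcal D$ by a polyhedral divisor $\mathcal D_G$ with $\varphi_g^*\mathcal D_G=\mathcal D_G$ for all $g$, still defining $X$ equivariantly; here one must check that $G$-invariance is achievable on the nose and not merely up to principal plurifunctions $\mathrm{div}(f_g)$, which is where a small argument using that $G$ is finite (so one can symmetrize, or choose a $G$-linearized very ample sheaf by a standard finite-group linearization argument) is needed.

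Next, part 2). Once $\mathcal D_G$ is $G$-invariant, each $A_u=\Gamma(Y,\mathcal O(\mathcal D_G(u)))$ is a $G$-module, and $A^G=\bigoplus_u A_u^G$. I would identify the subgroup $\mathbb T''\subset\mathbb T$ acting trivially on $A^G$: it is the subtorus whose character lattice is the quotient of $M$ by the saturation of the sublattice $M'$ of weights $u$ for which $A_u^G\neq 0$, so that $\mathbb T'=\mathbb T/\mathbb T''$ acts effectively on $X/\!/G$, with character lattice $M'\hookrightarrow M$ (this is the inclusion referenced as \ref{G in T}, inducing the dual surjection $F:M^\vee=N\to N'=M'^\vee$). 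On the geometric side, $Y/\!/G$ carries the polyhedral divisor obtained by pushing $\mathcal D_G$ forward along $F$ and then descending along $\varphi_G:Y\to Y/\!/G$; the precise normalization is the stated compatibility $F_*(\mathcal D_G)=\varphi_G^*(\mathcal D')$, which by the projection-formula behaviour of $\varphi_G^*$ on invariant divisors (using that $\varphi_G$ is finite and that $G$-invariant Weil divisors on $Y$ correspond to Weil divisors on $Y/\!/G$ up to the ramification contributions) determines $\mathcal D'$ on each prime divisor of $Y/\!/G$. It then remains to verify the algebra identity $A^G\cong\bigoplus_{u\in\sigma'^\vee\cap M'}\Gamma(Y/\!/G,\mathcal O(\mathcal D'(u)))$: the inclusion $\Gamma(Y/\!/G,\mathcal O(\mathcal D'(u)))=\Gamma(Y,\mathcal O(\varphi_G^*\mathcal D'(u)))^G=\Gamma(Y,\mathcal O(F_*\mathcal D_G(u)))^G\subseteq\Gamma(Y,\mathcal O(\mathcal D_G(u)))^G=A_u^G$ is immediate, and the reverse inclusion follows because a $G$-invariant section on $Y$ descends to $Y/\!/G$ and its divisor of zeros, being $G$-invariant, is bounded below by $\varphi_G^*\mathcal D'(u)$ once one matches multiplicities. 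One also checks $\mathcal D'$ is a genuine pp-divisor: properness (semi-ampleness and bigness) is inherited under the finite surjection $\varphi_G$ and the push-forward $F_*$, by the usual fact that these positivity notions are insensitive to finite covers and to enlarging the tail cone within the relevant halfspaces.

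The main obstacle I expect is part 1), specifically upgrading ``$G$ acts compatibly on the canonically constructed $Y$'' to ``there is a genuinely $G$-\emph{invariant} pp-divisor $\mathcal D_G$ defining $X$ $G$-equivariantly.'' The subtlety is that two pp-divisors on the same $Y$ differing by $\mathrm{div}(f)$ for a plurifunction $f$ give the same $\mathbb{T}$-variety, so a priori the $G$-action only fixes the class of $\mathcal D$, and one must produce an actual invariant representative — this requires a finite-group linearization/averaging argument on the sheaves $\mathcal O_Y(\mathcal D(u))$ (or equivalently on the associated graded-algebra generators), together with care that the averaged object is still polyhedral with the correct tail cone. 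The rest is bookkeeping with the functoriality of $\mathbb{S}(-,-)$ and standard facts about finite quotients.
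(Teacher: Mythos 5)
Your overall strategy matches the paper's: build $Y$ by the canonical semistable-quotient construction so that $G$ acts on it, then descend the divisor to $Y/\!/G$ and take $G$-invariant sections. However, there is a genuine gap at exactly the point you flag in part 1), and the fixes you sketch do not close it. Averaging does not work: if $\varphi_g^*\mathcal{D}=\mathcal{D}+\mathrm{div}(f_g)$, then the Minkowski average $\tfrac{1}{|G|}\sum_g\varphi_g^*\mathcal{D}$ differs from $\mathcal{D}$ by $\tfrac{1}{|G|}\sum_g\mathrm{div}(f_g)$, and since $\mathbb{C}(Y)^*$ is not divisible, $\prod_g f_g$ need not be a $|G|$-th power in $N\otimes_{\mathbb{Z}}\mathbb{C}(Y)^*$; the averaged divisor is then only $\mathbb{Q}$-linearly equivalent to $\mathcal{D}$ and need not define an isomorphic $\mathbb{T}$-variety. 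The paper's Lemma \ref{G inv divisor} resolves this differently and concretely: in the Altmann--Hausen construction the divisor $\mathcal{D}(u)$ is determined by a choice of $h(u)=f/g\in\mathrm{Frac}(A)$ with $f\in A_{u_1}$, $g\in A_{u_2}$, $u_1-u_2=u$, its local equation on $Y_s$ being $h(u)/s$. Because each $A_{u_i}$ is $G$-stable and $G$ is finite \emph{abelian}, $A_{u_i}$ decomposes into $G$-eigenspaces, so $f$ and $g$ can be chosen $G$-semi-invariant; then $h(u)$ is $G$-semi-invariant, its divisor is $G$-invariant, and $\mathcal{D}(u)$ is honestly $G$-invariant on the nose. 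This is the missing argument, and it is also where the hypothesis that $G$ is abelian actually enters; your ``standard linearization'' appeal does not identify this.

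For part 2) your computation of $A^G_u=\Gamma(Y/\!/G,\mathcal{O}(\mathcal{D}'(u)))$ via $\varphi_G^*$ is the same as the paper's Lemma \ref{Effective}. But the paper organizes the quotient as a two-step factorization that you skip: it sets $H\subset G\times\mathbb{T}$ to be the kernel of the action, lets $G_0$ be its image in $G$, first quotients by $G_0$ (which acts through $\mathbb{T}$, hence trivially on $Y$, and only shrinks the character lattice to $M'\subset M$, producing $F$ as in Lemma \ref{G in T}), and then applies the effective case to $G/G_0\times\mathbb{T}'$ (which only changes the base via $\varphi_G$). Your merged treatment blurs this: note in particular that $\mathbb{T}\to\mathbb{T}'$ has \emph{finite} kernel ($M'$ has finite index in $M$), so your description of $\mathbb{T}''$ as ``the subtorus whose character lattice is $M$ modulo the saturation of $M'$'' would make $\mathbb{T}''$ trivial and $\mathbb{T}'=\mathbb{T}$, contradicting your own assertion that $\mathbb{T}'$ has character lattice $M'$. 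The sublattice of weights $u$ with $A_u^G\neq 0$ is also not the right invariant once $G$ does not act through $\mathbb{T}$, since $A_u^G$ can then be a proper nonzero subspace of $A_u$; the paper's splitting into the two cases is precisely what keeps these phenomena separate.
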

We will divide the proof in several
steps. First we will prove that
the action of $G$ on $X$ induces
an action of $G$ on $Y$. Secondly
we will consider the case where
the orbits of the $G$-action are
included in the orbits of the $\mathbb{T}$-action
and finally we consider the case
where the action of $G\times\mathbb{T}$
is effective on $X$.
\begin{lem}
\label{normalization}Let $Y$
a quasi-projective variety endowed
with an action of a finite group
$G$ and let $\widehat{Y}\rightarrow Y$
be the normalization of $Y$. Then
the action of $G$ lifts to an
action on $\widehat{Y}$ and the
induced morphism $\widehat{Y}/\!/G\rightarrow Y/\!/G$
is the normalization of $Y/\!/G$.\end{lem}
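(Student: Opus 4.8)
The plan is to reduce the statement to a computation with integral closures and invariants, after passing to $G$-stable affine charts.

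\textbf{Step 1: lifting the action.} Write $\nu\colon\widehat{Y}\to Y$ for the normalization and, for $g\in G$, let $\psi_g\colon Y\to Y$ be the corresponding automorphism. The composite $\psi_g\circ\nu\colon\widehat{Y}\to Y$ is a dominant morphism from a normal integral variety, so by the universal property of normalization it factors uniquely as $\nu\circ\widehat{\psi}_g$ for a morphism $\widehat{\psi}_g\colon\widehat{Y}\to\widehat{Y}$. Applying uniqueness to $\nu\circ\widehat{\psi}_g\circ\widehat{\psi}_h=\psi_{gh}\circ\nu=\nu\circ\widehat{\psi}_{gh}$ and to $\nu\circ\widehat{\psi}_e=\nu$ shows $\widehat{\psi}_e=\mathrm{id}$ and $\widehat{\psi}_{gh}=\widehat{\psi}_g\circ\widehat{\psi}_h$, so $g\mapsto\widehat{\psi}_g$ is a $G$-action on $\widehat{Y}$ for which $\nu$ is equivariant. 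Concretely, on an affine chart $\mathrm{Spec}(A)$ of $Y$ the normalization is $\mathrm{Spec}(\widetilde{A})$ with $\widetilde{A}$ the integral closure of $A$ in $K=\mathrm{Frac}(A)$, and the $G$-action on $A$ extends uniquely to $K$ and preserves $\widetilde{A}$.

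\textbf{Step 2: reduction to the affine case.} Since $Y$ is quasi-projective and $G$ is finite, every $G$-orbit in $Y$ lies in an affine open subset (a finite set of points of a quasi-projective variety is contained in an affine open), and intersecting the $G$-translates of such an open produces a $G$-stable affine open neighbourhood of the orbit. Hence $Y$ is covered by $G$-stable affine opens $U_i=\mathrm{Spec}(A_i)$; the quotients $\mathrm{Spec}(A_i^{G})$ glue to $Y/\!/G$, while $\nu^{-1}(U_i)=\mathrm{Spec}(\widetilde{A_i})$ glue to $\widehat{Y}$ and $\mathrm{Spec}((\widetilde{A_i})^{G})$ glue to $\widehat{Y}/\!/G$, compatibly with the maps $A_i^{G}\hookrightarrow(\widetilde{A_i})^{G}$. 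As normalization is local on the base, it is enough to prove: for a finitely generated $\mathbb{C}$-domain $A$ with $G$-action, $(\widetilde{A})^{G}$ is the integral closure of $A^{G}$ in $\mathrm{Frac}(A^{G})$.

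\textbf{Step 3: the algebra.} Each $a\in A$ is a root of the monic polynomial $\prod_{g\in G}(T-g\cdot a)\in A^{G}[T]$, so $A$ is integral over $A^{G}$, hence so are $\widetilde{A}$ and $(\widetilde{A})^{G}$. Next, $K^{G}=\mathrm{Frac}(A^{G})$: given $x=a/b\in K^{G}$ with $a,b\in A$, write $x=\bigl(a\prod_{g\neq e}g\cdot b\bigr)/\bigl(\prod_{g\in G}g\cdot b\bigr)$; the denominator lies in $A^{G}$, and the numerator equals $x\cdot\prod_{g\in G}g\cdot b\in K^{G}\cap A=A^{G}$. In particular $(\widetilde{A})^{G}\subseteq K^{G}=\mathrm{Frac}(A^{G})$, so $(\widetilde{A})^{G}$ is an $A^{G}$-subalgebra of $\mathrm{Frac}(A^{G})$ that is integral over $A^{G}$. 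Conversely, if $x\in\mathrm{Frac}(A^{G})=K^{G}$ is integral over $A^{G}$, then $x$ is integral over $A$ and lies in $K$, hence $x\in\widetilde{A}$, and being $G$-invariant $x\in(\widetilde{A})^{G}$. Thus $(\widetilde{A})^{G}$ is exactly the integral closure of $A^{G}$ in $\mathrm{Frac}(A^{G})$; gluing over the $U_i$ identifies $\widehat{Y}/\!/G\to Y/\!/G$ with the normalization of $Y/\!/G$.

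\textbf{Main obstacle.} There is nothing genuinely hard here; the two points that need care are the identity $\mathrm{Frac}(A)^{G}=\mathrm{Frac}(A^{G})$, together with the resulting check that $(\widetilde{A})^{G}$ sits inside the correct fraction field, and the verification that the local constructions are compatible with gluing the quasi-projective $Y$ from $G$-stable affine charts. Once these are in place the rest is a formal consequence of the universal property of normalization. (If one wishes to allow $Y$ reducible, one first normalizes componentwise, using that $G$ merely permutes the irreducible components; this does not affect the argument.)
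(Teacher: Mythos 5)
Your proof is correct and follows essentially the same route as the paper: reduce to $G$-stable affine charts (using quasi-projectivity and finiteness of $G$) and then identify $(\widetilde{A})^{G}$ with the integral closure of $A^{G}$, using products over $G$ to produce invariant monic polynomials. You are in fact slightly more careful than the paper in checking $\mathrm{Frac}(A)^{G}=\mathrm{Frac}(A^{G})$, a point the paper's proof leaves implicit when it invokes the universal properties for the inclusion $\mathbb{C}[\widehat{Y/\!/G}]\subset\mathbb{C}[\widehat{Y}]^{G}$.
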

\begin{proof}
Since $Y$ is quasi-projective
and $G$ is finite, every $x\in X$
admits a $G$-invariant affine
open neighborhood. The normalization
being a local operation, we may
assume that $Y$ is affine. Using
the universal properties of the
normalization and of the quotient
we obtain the following commutative
diagram:

\[
\begin{array}{ccc}
Y & \rightarrow & Y/\!/G\\
\uparrow &  & \uparrow\\
\widehat{Y} & \rightarrow & \widehat{Y/\!/G}\\
 & \searrow & \uparrow\\
 &  & \widehat{Y}/\!/G
\end{array}
\]
Thus $\mathbb{C}[\widehat{Y/\!/G}]\subset\mathbb{C}[\widehat{Y}]^{G}$.
Conversely, let $f\in\mathbb{C}[\widehat{Y}]^{G}$.
Then $g.f=f$ for all $g$ in $G$
and there exists a monic polynomial
$P$ with coefficients in $\mathbb{C}[Y]$
such that $P(f)=0$. Since $G$
is finite, $Q=\underset{g\in G}{\prod}g.P$
is a monic polynomial with $G$-invariant
coefficients and $G(f)=0$. So
$f\in\mathbb{C}[\widehat{Y/\!/G}]$
.\end{proof}
\begin{cor}
\label{G Y}Let $X$ be a $\mathbb{T}$-variety
and suppose that a finite abelian
group $G$ acts on $X$ such the
two actions commute. Then there
exists a semi-projective variety
$Y$ and a pp-divisor $\mathcal{D}$
on $Y$ such that $X$ is $G\times\mathbb{T}$
equivariantly isomorphic to $\mathbb{S}(Y,\mathcal{D})$
and the action of $G$ on $\mathbb{S}(Y,\mathcal{D})$
induces an action of $G$ on $Y$.\end{cor}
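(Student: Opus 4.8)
The plan is to combine the Altmann--Hausen construction recalled in \S\ref{-the-semi-projective} with the naturality of that construction with respect to automorphisms, and then to fix the non-normality issue using Lemma \ref{normalization}.

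First I would observe that, since the actions of $G$ and $\mathbb{T}$ on $X=\operatorname{Spec}(A)$ commute, each $g\in G$ acts on $A=\bigoplus_{u\in M}A_{u}$ by a $\mathbb{T}$-equivariant (hence $M$-graded) algebra automorphism. Consequently $g$ permutes the sets of semistable points: for $u\in M$ and $f\in A_{nu}$ with $f(x)\neq 0$ we have $g^{*}f\in A_{nu}$ and $(g^{*}f)(g^{-1}x)\neq 0$, so $g\cdot X^{ss}(u)=X^{ss}(u)$. Here I use that $g$ acts on the graded pieces $A_{nu}$ individually; this is the key point that makes the GIT data $G$-equivariant. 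It follows that the fan $\Lambda\subset M_{\mathbb{Q}}$ of GIT chambers and the open subsets $W_{\lambda}=X^{ss}(u)$ are all $G$-stable, and $G$ acts compatibly on the inverse system $\{W_{\lambda}\}$, hence on $W=\varprojlim W_{\lambda}$.

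Next I would push this action down through the quotient maps. For each cone $\lambda$, $g$ induces an isomorphism $W_{\lambda}\to W_{\lambda}$ commuting (up to the $\mathbb{T}$-action, which $g$ centralizes) with the good quotient $q_{\lambda}:W_{\lambda}\to Y_{\lambda}=\operatorname{Proj}_{A_{0}}(\bigoplus_{n\geq 0}A_{n\lambda_{0}})$; by the universal property of the good quotient, $g$ descends to an automorphism of $Y_{\lambda}$, and these are compatible over the faces of $\Lambda$. Passing to the inverse limit $Z=\varprojlim Y_{\lambda}$ and then to the closure of the image of $W$, we obtain a (possibly non-normal) variety carrying an action of $G$ and a morphism from $X$ that is $G\times\mathbb{T}$-equivariant, together with a pp-divisor on it (built from the $\mathbb{T}$-stable prime divisors of $X$ and their images, as in \S\ref{sub:Determining-proper-polyhedral}, which is again $G$-equivariant because the toric embedding can be taken $G$-equivariantly, or simply because $G$ permutes the relevant divisorial data). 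Finally, to obtain a \emph{semi-projective}, hence normal, base I apply Lemma \ref{normalization}: the normalization $Y$ of this closure inherits a $G$-action lifting the one below, and the Altmann--Hausen pair $(Y,\mathcal{D})$ built on the normalization still satisfies $X\cong\mathbb{S}(Y,\mathcal{D})$ because normalization does not change the graded ring $A=\bigoplus_{u}\Gamma(Y,\mathcal{O}_{Y}(\mathcal{D}(u)))$ of global sections used to reconstruct $X$. The isomorphism $X\cong\mathbb{S}(Y,\mathcal{D})$ is then $G\times\mathbb{T}$-equivariant by construction, and by the last paragraph the $G$-action on $\mathbb{S}(Y,\mathcal{D})$ is induced from a $G$-action on $Y$.

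The step I expect to be the main obstacle is checking that the chamber fan $\Lambda$ and the whole inverse system descend $G$-equivariantly in a way that is genuinely compatible over all faces simultaneously --- i.e. that the individual descended automorphisms of the $Y_{\lambda}$ glue to a single automorphism of $Z$ and then of $Y$ --- rather than each of the verifications of $G$-stability of $X^{ss}(u)$, which are immediate from the grading argument above. One has to be a little careful that the construction of $Y$ in \cite[\S 6]{A-H} (closure of the image, then normalization) is functorial for the group of $M$-graded automorphisms of $A$; granting that functoriality, the corollary follows, and this is exactly what Lemma \ref{normalization} supplies on the normalization level.
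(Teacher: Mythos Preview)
Your argument is correct and follows essentially the same route as the paper: you run the Altmann--Hausen construction of $Y$ from \S\ref{-the-semi-projective}, observe that each $X^{ss}(u)$ (hence $W$) is $G$-stable because $G$ preserves the $M$-grading, descend the $G$-action through the $\mathbb{T}$-quotient to the image in $Z$, take the closure, and then invoke Lemma \ref{normalization} to lift to the normalization $Y$. The only cosmetic difference is that you descend to each $Y_{\lambda}$ separately and glue over the inverse system, whereas the paper descends the $G$-action in one step from $W$ to $q'(W)\subset Z$ using that $q'$ is a categorical $\mathbb{T}$-quotient; your worry about compatibility over faces is therefore bypassed in the paper's phrasing. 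One small remark: the $G$-equivariance of the pp-divisor itself is not needed here and is handled separately in Lemma \ref{G inv divisor}, so you can safely drop that aside from your second paragraph.
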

\begin{proof}
We consider the construction of
$Y$ given in section \ref{-the-semi-projective}.
Since the action of $G$ and $\mathbb{T}$
commute, for every $\lambda\in\Lambda$
the subset $X^{ss}(u)$ with $u\in relint(\lambda)$
is $G$-stable. Thus $W:=\cap_{\lambda\in\Lambda}W_{\lambda}$
is also $G$-stable. Since $q':W\rightarrow Z$
is the quotient by $\mathbb{T}$,
the action of $G$ on $W$ induces
one on $q'(W)$. The closure $\overline{q'(W)}$
is again $G$-stable, and since
$\overline{q'(W)}$ is quasi-projective
it follows from lemma \ref{normalization}
that the action of $G$ lifts to
an action on $Y$.\end{proof}
\begin{lem}
\label{G inv divisor}Let $X=\mathrm{Spec}(A)$
be a $\mathbb{T}$-variety and
let $G$ a finite abelian group
acting on $X$ such that the two
actions commute. Then there exists
a $G$-invariant pp-divisor $\mathcal{D}_{G}$
defined on $Y$ such that $X$
is equivariantly isomorphic to
$\mathbb{S}(Y,\mathcal{D}_{G})$.\end{lem}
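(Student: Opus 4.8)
The plan is to start from the pair $(Y,\mathcal{D})$ produced by Corollary~\ref{G Y}, so that $X$ is $G\times\mathbb{T}$-equivariantly isomorphic to $\mathbb{S}(Y,\mathcal{D})$ and $G$ acts on $Y$, and then to replace $\mathcal{D}$ by a $G$-invariant pp-divisor on the same $Y$ differing from $\mathcal{D}$ by a principal polyhedral divisor. For $g\in G$ let $\Phi_{g}$ be the induced automorphism of $\mathbb{S}(Y,\mathcal{D})$; it lies over the automorphism $\psi_{g}$ of $Y$, and since the $G$-action commutes with $\mathbb{T}$ it is genuinely $\mathbb{T}$-equivariant (it does not twist the torus), so by Section~\ref{sub:Maps-of-proper} it corresponds to a triple $(\psi_{g},\mathrm{id}_{N},f_{g})$ with $f_{g}\in N\otimes_{\mathbb{Z}}\mathbb{C}(Y)^{*}$. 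Because $\Phi_{g}$ is an isomorphism, applying the defining inequality to $\Phi_{g}$ and to $\Phi_{g}^{-1}=\Phi_{g^{-1}}$ forces the equality $\psi_{g}^{*}(\mathcal{D})=\mathcal{D}+\mathrm{div}(f_{g})$; and writing out $\Phi_{g}\circ\Phi_{h}=\Phi_{gh}$ with the composition rule of Section~\ref{sub:Maps-of-proper} and using that $G$ is abelian shows that $\{f_{g}\}_{g\in G}$ is a $1$-cocycle for the natural $G$-module structure on $N\otimes_{\mathbb{Z}}\mathbb{C}(Y)^{*}$.

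The crucial step is to show that this cocycle is a coboundary, i.e. that there is an $f\in N\otimes_{\mathbb{Z}}\mathbb{C}(Y)^{*}$ with $f_{g}=f\cdot(\psi_{g}^{*}f)^{-1}$ for every $g$. Since $G$ acts trivially on $N$, the module $N\otimes_{\mathbb{Z}}\mathbb{C}(Y)^{*}$ is a direct sum of $\mathrm{rk}(N)$ copies of $\mathbb{C}(Y)^{*}$, so it is enough to know $H^{1}(G,\mathbb{C}(Y)^{*})=0$; when $G$ acts faithfully on $Y$ this is Hilbert's Theorem~90 (Noether's form) for the Galois extension $\mathbb{C}(Y)/\mathbb{C}(Y)^{G}$. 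Granting such an $f$, set $\mathcal{D}_{G}:=\mathcal{D}+\mathrm{div}(f)$. Adding the principal polyhedral divisor $\mathrm{div}(f)$, whose tail cone is trivial, keeps the tail cone $\sigma$, and it preserves the proper-polyhedral conditions since each $\mathcal{D}_{G}(u)$ is $\mathbb{Q}$-linearly equivalent to $\mathcal{D}(u)$, while $\mathbb{Q}$-Cartierness, semi-ampleness and bigness depend only on the linear equivalence class. Finally $\psi_{g}^{*}(\mathcal{D}_{G})=\psi_{g}^{*}(\mathcal{D})+\mathrm{div}(\psi_{g}^{*}f)=\mathcal{D}+\mathrm{div}(f_{g}\cdot\psi_{g}^{*}f)=\mathcal{D}+\mathrm{div}(f)=\mathcal{D}_{G}$, so $\mathcal{D}_{G}$ is $G$-invariant.

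It remains to identify $X$ with $\mathbb{S}(Y,\mathcal{D}_{G})$ $G\times\mathbb{T}$-equivariantly, the $G$-action on the target being the one attached to the $G$-invariance of $\mathcal{D}_{G}$. The triple $(\mathrm{id}_{Y},\mathrm{id}_{N},f)$ defines a $\mathbb{T}$-equivariant isomorphism $\Theta:\mathbb{S}(Y,\mathcal{D})\to\mathbb{S}(Y,\mathcal{D}_{G})$ (the defining inequality being an equality), and computing $\Theta\circ\Phi_{g}$ and $(\psi_{g},\mathrm{id}_{N},1)\circ\Theta$ with the composition rule gives the triples $(\psi_{g},\mathrm{id}_{N},f_{g}\cdot\psi_{g}^{*}f)$ and $(\psi_{g},\mathrm{id}_{N},f)$, which coincide precisely by the coboundary relation $f_{g}=f\cdot(\psi_{g}^{*}f)^{-1}$. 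Hence $\Theta$ intertwines the $G$-action transported from $X$ with the tautological $G$-action on $\mathbb{S}(Y,\mathcal{D}_{G})$, and composing with the isomorphism of Corollary~\ref{G Y} proves the lemma.

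I expect the main obstacle to be the vanishing $H^{1}(G,\mathbb{C}(Y)^{*})=0$ when the $G$-action on $Y$ is not faithful, i.e. when a nontrivial part of $G$ moves points only inside $\mathbb{T}$-orbits, so that $\mathbb{C}(Y)/\mathbb{C}(Y)^{G}$ is not Galois with group $G$. This is exactly the dichotomy announced after Theorem~\ref{thm:Main_TH}: one first treats the case where all $G$-orbits lie in $\mathbb{T}$-orbits, where $\psi_{g}=\mathrm{id}$ for every $g$ and the cocycle takes values in $N\otimes_{\mathbb{Z}}\mathcal{O}^{*}(Y)$ and can be handled directly, and then the case where $G\times\mathbb{T}$ acts effectively, where the action on $Y$ is faithful enough to run the argument above.
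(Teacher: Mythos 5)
Your cocycle-plus-Hilbert-90 strategy is genuinely different from the paper's argument, and in the case where $G$ acts faithfully on $Y$ it is essentially correct: the automorphisms $\Phi_{g}$ are represented by triples $(\psi_{g},\mathrm{id}_{N},f_{g})$ with $\psi_{g}^{*}(\mathcal{D})=\mathcal{D}+\mathrm{div}(f_{g})$, the $f_{g}$ form a $1$-cocycle in $N\otimes_{\mathbb{Z}}\mathbb{C}(Y)^{*}\simeq(\mathbb{C}(Y)^{*})^{\mathrm{rk}(N)}$, Hilbert 90 for $\mathbb{C}(Y)/\mathbb{C}(Y)^{G}$ trivializes it, and twisting $\mathcal{D}$ by $\mathrm{div}(f)$ produces a $G$-invariant pp-divisor in the same linear equivalence class. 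The paper does something much more elementary and avoids cohomology altogether: following the proof of Theorem 3.4 in \cite{A-H}, the divisor is determined by a choice of rational functions $h(u)=f/g$, $f\in A_{u_{1}}$, $g\in A_{u_{2}}$, $u_{1}-u_{2}=u$, via $A_{u}=h(u)\cdot\Gamma(Y,\mathcal{O}_{Y}(\mathcal{D}(u)))$; since $G$ is finite abelian and commutes with $\mathbb{T}$, each $A_{u_{i}}$ splits into $G$-eigenspaces, so $f$ and $g$, hence $h(u)$, can be chosen $G$-semi-invariant, and then $\mathcal{D}(u)$ is $G$-invariant on the nose. That is where the abelian hypothesis is really used; your route would in principle also handle non-abelian $G$, but only in the faithful case.

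The genuine gap is the non-faithful case, and the dichotomy you invoke does not close it for this lemma. In the paper, Lemmas \ref{G in T} and \ref{Effective} are used only for part 2 of Theorem \ref{thm:Main_TH} (describing $X/\!/G$), \emph{after} Lemma \ref{G inv divisor} has been established for arbitrary $G$; they cannot be fed back into its proof, because a general $G$ is neither contained in $\mathbb{T}$ nor such that $G\times\mathbb{T}$ is effective, and passing to $X/\!/G_{0}$ yields an invariant divisor on a different variety, not on $X$. The correct repair inside your framework is to observe that only the image $\bar{G}$ of $G$ in $\mathrm{Aut}(Y)$ matters (for $g$ in the kernel $K$ one has $\psi_{g}=\mathrm{id}$, so invariance of any $\mathcal{D}$ is automatic) and then to descend the cocycle to $\bar{G}$. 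That descent is not free: for $k\in K$ the plurifunction $f_{k}$ lies in $N\otimes\mathcal{O}^{*}(Y)$ and is generally nontrivial (it records how $k$ acts through $\mathbb{T}$ on the graded pieces), so $\{f_{g}\}$ only induces a $\bar{G}$-cocycle up to constants, and trivializing $\psi_{\bar{g}}^{*}\mathcal{D}-\mathcal{D}$ as a coboundary of principal polyhedral divisors leaves an obstruction in $H^{2}(\bar{G},N\otimes\mathbb{C}^{*})$. It does vanish (the class comes from the abelian extension $1\to K\to G\to\bar{G}\to1$, hence is symmetric, and $\mathrm{Ext}^{1}(\bar{G},\mathbb{C}^{*})=0$ by divisibility), and your parenthetical claim that effectiveness of $G\times\mathbb{T}$ forces faithfulness on $Y$ is true but itself requires an argument; none of this is in your sketch, so as written the proof only covers the case where $G$ acts faithfully on $Y$.
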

\begin{proof}
By lemma \ref{G Y} the action
of $G$ on $X$ induces an action
of $G$ on $Y$. By the proof of
Theorem 3.4 in \cite{A-H}, a pp-divisor
on $Y$ corresponding to $X$ is
determined by the choice of a homomorphism
$h$ from $M$ into the fraction
field of $A$ with the property
that for every $u\in M$, $h(u)$
is semi-invariant of weight $u$.
Namely, if $u\in\sigma^{\vee}\cap M$
is any \emph{saturated element},
that is, $u\in\sigma^{\vee}\cap M$
such that $\bigoplus_{n\in\mathbb{N}}A_{nu}$
is generated in degree $1$, then
there exist a unique Cartier divisor
$\mathcal{D}(u)$ such that $A_{u}=h(u).\Gamma(Y,\mathcal{O}_{Y}(\mathcal{D}(u)))$:
its local equations on open subsets
$Y_{s}$ with $s\in A_{u}$ are
$h(u)/s$. By definition $h(u)=\frac{f}{g}$
where $f$ and $g$ are both non
zero and $f\in A_{u_{1}}$, $g\in A_{u_{2}}$
such that $u_{1}-u_{2}=u$. Since
$A_{u}$ is $G$-stable for all
$u\in M$, we can choose $f\in A_{u_{1}}$,
$g\in A_{u_{2}}$ semi-invariant
for the action of $G$ with $u_{1}-u_{2}=u$
so that $h(u)=f/g$ is also semi-invariant
for $G$. The corresponding divisor
$\mathcal{D}(u)$ is then $G$-invariant.
In the case of a general $u\in\sigma^{\vee}\cap M$,
we can choose a saturated multiple
$nu$ and define $\mathcal{D}(u)=\mathcal{D}(nu)/n$.
\end{proof}
To complete the proof of Theorem
\ref{thm:Main_TH}, we divide the
argument into two cases. First
we consider the situation where
$G$ is a subgroup of $\mathbb{T}$
and secondly where the action of
$G\times\mathbb{T}$ is effective. 
\begin{lem}
\label{G in T}Let $X$ be the
$\mathbb{T}$-variety $\mathbb{S}(Y,\mathcal{D})$
and let $G$ be a finite abelian
subgroup of $\mathbb{T}=\mathrm{Spec}(\mathbb{C}[M])$.
Then $X'=X/\!/G$ is a $\mathbb{T}'$-variety
where $\mathbb{T}'\simeq\mathbb{T}/\!/G$
and is equivariantly isomorphic
to the $\mathbb{S}(Y,F_{*}(\mathcal{D}))$
where $F:N=M^{\vee}\rightarrow N'=(M')^{\vee}$
is the linear map induced by the
inclusion between the character
lattices $M'$ of $\mathbb{T}'$
and $M$ of $\mathbb{T}$.\end{lem}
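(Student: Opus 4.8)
The plan is to work directly with graded algebras and translate the quotient by $G\subset\mathbb{T}$ into a sublattice inclusion. Since $G$ is a finite abelian subgroup of $\mathbb{T}=\operatorname{Spec}(\mathbb{C}[M])$, it corresponds to a surjection of character lattices $M\twoheadrightarrow M^{*}$ onto the (finite) character group of $G$, and $\mathbb{T}'=\mathbb{T}/\!/G=\operatorname{Spec}(\mathbb{C}[M'])$ where $M'=\ker(M\to M^{*})$ is a finite-index sublattice of $M$. Dualizing, we get the inclusion $F\colon N\to N'$ with finite cokernel announced in the statement; here $N=M^{\vee}$, $N'=(M')^{\vee}$, and $F$ is injective with $F_{\mathbb{Q}}\colon N_{\mathbb{Q}}\to N'_{\mathbb{Q}}$ an isomorphism. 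First I would record that the invariant ring is $A^{G}=\bigoplus_{u\in M'}A_{u}$: an element of $A_{u}$ is semi-invariant for $G$ with weight given by the image of $u$ in $M^{*}$, so it is $G$-invariant exactly when $u\in M'$. Thus $X'=\operatorname{Spec}(A^{G})$ carries the residual $\mathbb{T}'$-action with $M'$-grading $A^{G}=\bigoplus_{u\in\sigma^{\vee}\cap M'}\Gamma(Y,\mathcal{O}_{Y}(\mathcal{D}(u)))$.

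Next I would identify the tail cone and the polyhedral coefficients on the $N'$ side. Since $M'\subset M$ has finite index, $\sigma^{\vee}\cap M'$ generates the same cone $\sigma^{\vee}\subset M_{\mathbb{Q}}=M'_{\mathbb{Q}}$, so the weight cone is unchanged and the tail cone of the new pp-divisor is $\sigma$ viewed inside $N'_{\mathbb{Q}}$ via $F_{\mathbb{Q}}$, i.e. $F(\sigma)=F_{\mathbb{Q}}(\sigma)$. By definition $F_{*}(\mathcal{D})=\sum\bigl(F(\Delta_{i})+F(\sigma)\bigr)\otimes D_{i}=\sum F_{\mathbb{Q}}(\Delta_{i})\otimes D_{i}$, the last equality because $F_{\mathbb{Q}}$ already sends $\sigma$-polyhedra to $F(\sigma)$-polyhedra. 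The key compatibility to check is the evaluation identity: for $u\in\sigma^{\vee}\cap M'$ one has $(F_{*}\mathcal{D})(u)=\mathcal{D}(u)$ as $\mathbb{Q}$-divisors on $Y$. This is immediate from $\min_{v\in F_{\mathbb{Q}}(\Delta_{i})}\langle u,v\rangle=\min_{w\in\Delta_{i}}\langle u,F_{\mathbb{Q}}(w)\rangle=\min_{w\in\Delta_{i}}\langle F^{\vee}(u),w\rangle$ together with the observation that $F^{\vee}\colon M'_{\mathbb{Q}}\to M_{\mathbb{Q}}$ is the canonical identification, so $F^{\vee}(u)=u$. Hence the graded pieces match degree by degree, giving a graded isomorphism $A^{G}\cong\bigoplus_{u\in\sigma^{\vee}\cap M'}\Gamma(Y,\mathcal{O}_{Y}((F_{*}\mathcal{D})(u)))=\Gamma(\mathbb{S}(Y,F_{*}\mathcal{D}))$, which is $\mathbb{T}'$-equivariant by construction.

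It remains to verify that $F_{*}(\mathcal{D})$ is again a genuine pp-divisor on $Y$, so that $\mathbb{S}(Y,F_{*}\mathcal{D})$ makes sense and Altmann--Hausen's theorem applies: the $D_{i}$ are unchanged and still effective; $\mathbb{Q}$-Cartier-ness, semi-ampleness and bigness of $(F_{*}\mathcal{D})(u)$ for $u$ in $\sigma^{\vee}\cap M'$ (resp. its relative interior) follow from the same properties of $\mathcal{D}(u')$ for $u'\in\sigma^{\vee}\cap M$, using that every $u\in\sigma^{\vee}\cap M'$ lies in $\sigma^{\vee}\cap M$ and that, conversely, for checking the covering/affineness conditions it suffices to test on a cofinal set of weights such as the multiples lying in $M'$ (any $u\in\sigma^{\vee}\cap M$ has a positive multiple in $M'$ since $[M:M']$ is finite). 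Alternatively one invokes the map of pp-divisors formalism of §\ref{sub:Maps-of-proper}: the triple $(\operatorname{id}_{Y},F,1)$ satisfies $\varphi^{*}(F_{*}\mathcal{D})=F_{*}(\mathcal{D})=F_{*}(\mathcal{D})+\operatorname{div}(1)$ trivially, and induces the inclusion $A^{G}\hookrightarrow A$, which exhibits $\mathbb{S}(Y,\mathcal{D})\to\mathbb{S}(Y,F_{*}\mathcal{D})$ as the quotient map by $G=\ker(\mathbb{T}\to\mathbb{T}')$. The main obstacle is the bookkeeping around the finite-index sublattice $M'\subset M$: one must be careful that passing from $M$ to $M'$ does not shrink the weight cone and that the pp-divisor axioms, which are stated in terms of \emph{all} lattice points of $\sigma^{\vee}$, survive the restriction; this is handled by the cofinality remark above, and once it is in place the rest is a direct computation.
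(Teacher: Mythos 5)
Your proof is correct and follows essentially the same route as the paper: identify the invariant ring as $\bigoplus_{u\in\sigma^{\vee}\cap M'}A_{u}$ for the finite-index sublattice $M'\subset M$ of characters trivial on $G$, and recognize this as the $M'$-graded algebra of $\mathbb{S}(Y,F_{*}(\mathcal{D}))$. You supply some details the paper leaves implicit (the evaluation identity $(F_{*}\mathcal{D})(u)=\mathcal{D}(u)$ for $u\in\sigma^{\vee}\cap M'$ and the survival of the pp-divisor axioms), and your $M'=\ker(M\to M^{*})$ agrees with the paper's sublattice generated by the weights with nonzero invariants, so the two arguments coincide in substance.
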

\begin{proof}
Let $Y$ be as in \ref{-the-semi-projective}.
Since by hypothesis the $G$-orbits
are contained in $\mathbb{T}$-orbits,
the induced $G$-action on $Y$
is trivial. In this case, for each
$u\in\sigma^{\vee}\cap M$, $A_{u}^{G}$
is either $A_{u}$ or $\{0\}$.
Letting $M'$ be the sublattice
$M$ generated by the elements
$u\in\sigma^{\vee}\cap M$ such
that $A_{u}^{G}\neq0$, 

\[
X'=X/\!/G=\mathrm{Spec}(\underset{u\in\sigma^{\vee}\cap M'}{\bigoplus}A_{u}^{G})
\]
is a $\mathbb{T}'$-variety where
$\mathbb{T}'=\mathrm{Spec}(\mathbb{C}[M'])$
is a torus of the same dimension
as $\mathbb{T}$. The inclusion
$M'\hookrightarrow M$ gives rise
the desired linear map $F:N=M^{\vee}\rightarrow N'=M'^{\vee}$.\end{proof}
\begin{rem}
This case corresponds to the map
of pp-divisors $(\mathrm{id},F,1)$
defined in as \ref{sub:Maps-of-proper}.
Indeed the quotient morphism $\varphi:Y\rightarrow Y/\!/G$
is the identity.\end{rem}
\begin{lem}
\label{Effective}Let $X$ be a
normal affine variety with an effective
action of $G\times\mathbb{T}$
where $G$ is a finite abelian
group. Then there exists a semi-projective
variety $Y$ on which $G$ acts
and a $G$-invariant pp-divisor
$\mathcal{D}_{G}$ on $Y$ such
that $X$ is $G\times\mathbb{T}$-equivariantly
isomorphic to $\mathbb{S}(Y,\mathcal{D}_{G})$.

Moreover $X/\!/G$ is $\mathbb{T}$-equivariantly
isomorphic to $\mathbb{S}(Y/\!/G,\mathcal{D}')$
where $\mathcal{D}_{G}=\varphi_{G}^{*}(\mathcal{D}')$.\end{lem}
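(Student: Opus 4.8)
The plan is to derive the first assertion from Corollary~\ref{G Y} and Lemma~\ref{G inv divisor}, after sharpening the latter in the effective case, and then to obtain $\mathcal{D}'$ from the resulting $G$-invariant pp-divisor by dividing each polyhedral coefficient by the ramification index of $\varphi_{G}$ along the corresponding orbit of prime divisors. For the first assertion, Corollary~\ref{G Y} furnishes a semi-projective variety $Y$ with a $G$-action and a pp-divisor $\mathcal{D}$ with $X\simeq\mathbb{S}(Y,\mathcal{D})$ equivariantly for $G\times\mathbb{T}$, and the proof of Lemma~\ref{G inv divisor} builds $\mathcal{D}$ from a homomorphism $h\colon M\to\mathrm{Frac}(A)^{*}$ with $h(u)$ a $\mathbb{T}$-semi-invariant of weight $u$ which can be taken $G$-semi-invariant. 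What I would add is that, when $G\times\mathbb{T}$ acts effectively, $h$ can be chosen $G$-\emph{invariant}. One first checks that the kernel $K$ of $G\to\mathrm{Aut}(Y)$ is trivial: an element $g\in K$ preserves each weight space $A_{u}$ and fixes $\mathbb{C}(Y)=\mathrm{Frac}(A)_{0}$, so, being fixed on the degree-zero element $f/f'$, it satisfies $g(f)/f=g(f')/f'\in\mathbb{C}(Y)$ for all non-zero $f,f'\in A_{u}$; calling this common ratio $\phi_{u}$, the relation $g^{|G|}=\mathrm{id}$ together with $g(\phi_{u})=\phi_{u}$ gives $\phi_{u}^{|G|}=1$, so $g$ acts on $A_{u}$ as multiplication by the scalar $\phi_{u}\in\mathbb{C}^{*}$; since the weight monoid generates $M$ (the torus acting effectively on $X$), $u\mapsto\phi_{u}$ comes from some $t_{g}\in\mathbb{T}$ acting on $A$ exactly as $g$, so $(g,t_{g}^{-1})$ acts trivially and hence $g=1$. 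Thus $\mathbb{C}(Y)/\mathbb{C}(Y)^{G}$ is Galois with group $G$, so by Hilbert's Theorem~90 (the normal basis theorem) $\mathbb{C}(Y)^{*}$ contains a semi-invariant for each character of $G$; multiplying each $h(e_{i})$, for a basis $(e_{i})$ of $M$, by such a semi-invariant makes $h$ a $G$-invariant homomorphism. For this $h$ the pp-divisor $\mathcal{D}_{G}$ of Lemma~\ref{G inv divisor} is $G$-invariant and the isomorphism $X\simeq\mathbb{S}(Y,\mathcal{D}_{G})$, $f\mapsto f/h(\deg f)$, is $G\times\mathbb{T}$-equivariant for the geometric $G$-action on the spaces $\Gamma(Y,\mathcal{O}_{Y}(\mathcal{D}_{G}(u)))$.

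For the second assertion I would write $\mathcal{D}_{G}=\sum_{i}\Delta_{i}\otimes\bigl(\sum_{l}D_{i,l}\bigr)$, grouping the prime divisors of $\mathcal{D}_{G}$ into $G$-orbits $\{D_{i,l}\}_{l}$ (by $G$-invariance the coefficient along an orbit is a single $\Delta_{i}$), let $\bar D_{i}=\varphi_{G}(D_{i,1})$ and let $e_{i}$ be the common ramification index of $\varphi_{G}$ over $\bar D_{i}$, so that $\varphi_{G}^{*}(\bar D_{i})=e_{i}\sum_{l}D_{i,l}$, and set $\mathcal{D}'=\sum_{i}\bigl(e_{i}^{-1}\Delta_{i}\bigr)\otimes\bar D_{i}$ on $Y'=Y/\!/G$, a polyhedral divisor with the same tail cone $\sigma$. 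A short computation then gives $\varphi_{G}^{*}(\mathcal{D}')=\mathcal{D}_{G}$ and $\varphi_{G}^{*}(\mathcal{D}'(u))=\mathcal{D}_{G}(u)$ for every $u\in\sigma^{\vee}\cap M$. Next, for any $\mathbb{Q}$-divisor $E$ on $Y'$ one has $\Gamma(Y',\mathcal{O}_{Y'}(E))=\Gamma(Y,\mathcal{O}_{Y}(\varphi_{G}^{*}E))^{G}$: indeed $\mathbb{C}(Y)^{G}=\mathbb{C}(Y')$, $\mathrm{div}_{Y}(f)=\varphi_{G}^{*}\mathrm{div}_{Y'}(f)$ for $f\in\mathbb{C}(Y')$, and $\varphi_{G}^{*}$ reflects effectivity of divisors since $\varphi_{G}$ is finite and surjective. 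Taking $E=\mathcal{D}'(u)$ and using that $h(u)$ is $G$-invariant yields $\Gamma(Y',\mathcal{O}_{Y'}(\mathcal{D}'(u)))=\Gamma(Y,\mathcal{O}_{Y}(\mathcal{D}_{G}(u)))^{G}=h(u)^{-1}A_{u}^{G}$ for every $u$, so $s\mapsto h(u)s$ induces a graded ring isomorphism $\bigoplus_{u}\Gamma(Y',\mathcal{O}_{Y'}(\mathcal{D}'(u)))\simeq A^{G}$, that is, $\mathbb{S}(Y',\mathcal{D}')\simeq\mathrm{Spec}(A^{G})=X/\!/G$ as $\mathbb{T}$-varieties. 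Finally, $Y'$ is semi-projective ($\Gamma(Y',\mathcal{O}_{Y'})=A_{0}^{G}$ is finitely generated and $Y'$ is projective over its spectrum), and $\mathcal{D}'$ is a pp-divisor: semi-ampleness and bigness of $\mathcal{D}'(u)$ transfer from those of $\mathcal{D}_{G}(u)$ by replacing a section $s$ of a multiple of $\mathcal{D}_{G}(u)$ by its norm $\prod_{g\in G}g\cdot s$, whose non-vanishing locus is $\bigcap_{g\in G}Y_{g\cdot s}$ and hence affine whenever $Y_{s}$ is, while $\mathbb{Q}$-Cartierness of $\mathcal{D}'(u)$ follows from that of $\varphi_{G}^{*}(\mathcal{D}'(u))=\mathcal{D}_{G}(u)$ by descent along the finite morphism $\varphi_{G}$.

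The step I expect to be the main obstacle is the reduction to the Galois situation, that is, the triviality of $K$: it is exactly what permits $h$ to be chosen $G$-invariant, and with a merely $G$-semi-invariant $h$ the spaces $A_{u}^{G}$ would be replaced by isotypic components of sections, forcing correction divisors on $Y'$ supported along the branch locus of $\varphi_{G}$ and thereby breaking the identity $\mathcal{D}_{G}=\varphi_{G}^{*}(\mathcal{D}')$. The one remaining technical point worth flagging is the descent of $\mathbb{Q}$-Cartierness along $\varphi_{G}$; the other verifications are routine.
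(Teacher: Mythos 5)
Your proof is correct and follows the same overall route as the paper: part 1 is reduced to Corollary~\ref{G Y} and Lemma~\ref{G inv divisor}, and part 2 comes down to the identity $\Gamma(Y,\mathcal{O}(\mathcal{D}_{G}(u)))^{G}=\Gamma(Y/\!/G,\mathcal{O}(\mathcal{D}'(u)))$ for a $\mathcal{D}'$ with $\varphi_{G}^{*}(\mathcal{D}')=\mathcal{D}_{G}$, which is exactly the computation the paper performs. The difference is that you supply two steps the paper leaves implicit. First, the paper merely asserts that a $\mathcal{D}'$ with $\varphi_{G}^{*}(\mathcal{D}')=\mathcal{D}_{G}$ exists; your construction (group the prime components of $\mathcal{D}_{G}$ into $G$-orbits, push down, and rescale each polyhedral coefficient by the inverse of the ramification index) is the right one and also makes the verification that $\mathcal{D}'$ is a pp-divisor routine. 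Second, and more substantively, you correctly identify that the paper's equality $\Gamma(Y,\mathcal{O}(\mathcal{D}_{G}(u)))^{G}=\{f\in\mathbb{C}(Y)^{G}\mid\mathrm{div}(f)+\mathcal{D}_{G}(u)\geq0\}\cup\{0\}$ tacitly assumes that the identification $A_{u}=h(u)\cdot\Gamma(Y,\mathcal{O}(\mathcal{D}_{G}(u)))$ intertwines the $G$-action on $A_{u}$ with the geometric action on rational functions, i.e.\ that $h(u)$ is $G$-invariant rather than merely $G$-semi-invariant as produced by Lemma~\ref{G inv divisor}. Your fix --- showing that effectiveness of the $G\times\mathbb{T}$-action forces $G$ to act faithfully on $Y$, so that $\mathbb{C}(Y)/\mathbb{C}(Y)^{G}$ is Galois and every character of $G$ is realized by a unit of $\mathbb{C}(Y)$, allowing $h$ to be corrected --- is sound (the scalar-action argument for the triviality of the kernel is exactly right), though what you invoke is the consequence of Hilbert~90 for cocycles valued in roots of unity rather than the normal basis theorem, which is a different statement even if it yields the same conclusion here. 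In short: same approach, but your version closes a real gap in the published argument; the only point I would still ask you to write out is the descent of $\mathbb{Q}$-Cartierness along $\varphi_{G}$ (replace a local equation of $\varphi_{G}^{*}(n\mathcal{D}'(u))$ by its norm under $G$), which you rightly flag.
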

\begin{proof}
By lemmas \ref{G Y} and \ref{G inv divisor},
$Y$ is endowed with an action
of $G$, and we can assume that
$X$ is equivariantly isomorphic
to $\mathbb{S}(Y,\mathcal{D}_{G})$.
Since $\mathcal{D}_{G}$ is $G$-stable,
for each $u\in\sigma^{\vee}\cap M$
, $\Gamma(Y,\mathcal{O}(\mathcal{D}_{G}(u)))$
is a $G$-invariant submodule of
$\Gamma(X,\mathcal{O}_{X})$ and
moreover there exists $\mathcal{D}'$
satisfies $\varphi_{G}^{*}\left(\mathcal{D}'\right)=\mathcal{D}_{G}$.
Therefore, $\Gamma(X/\!/G,\mathcal{O}_{X/\!/G})=(\underset{u\in\sigma^{\vee}\cap M}{\bigoplus}\Gamma(Y,\mathcal{O}(\mathcal{D}_{G}(u))))^{G}=\underset{u\in\sigma^{\vee}\cap M}{\bigoplus}\Gamma(Y,\mathcal{O}(\mathcal{D}_{G}(u)))^{G}$.

By assumption, $\varphi:Y\rightarrow Y/\!/G$
is the quotient morphism, and $\mathcal{D}'$
satisfying $\varphi_{G}^{*}\left(\mathcal{D}'\right)=\mathcal{D}_{G}$.
Thus 
\begin{eqnarray*}
\Gamma(Y,\mathcal{O}(\mathcal{D}_{G}(u)))^{G} & = & \left\{ f\in\mathbb{C}(Y)^{G},div(f)+\mathcal{D}_{G}(u)\geq0\right\} \cup\left\{ 0\right\} \\
 & = & \left\{ h\in\mathbb{C}(Y/\!/G),\varphi^{*}(\mathrm{div}(h)+\mathcal{D}'(u))\geq0\right\} \cup\left\{ 0\right\} \\
 & = & \left\{ h\in\mathbb{C}(Y/\!/G),\mathrm{div}(h)+\mathcal{D}'(u)\geq0\right\} \cup\left\{ 0\right\} .
\end{eqnarray*}
We conclude that $X/\!/G\simeq\mathrm{Spec}(\underset{u\in\sigma^{\vee}\cap M}{\bigoplus}\Gamma(Y/\!/G,\mathcal{O}(\mathcal{D}'(u))))$.\end{proof}
\begin{rem}
This lemma is the analogue of 4.1
in \cite{D}, in which Demazure
established a similar result for
algebras constructed from $\mathbb{Q}$-divisors.
This case corresponds to the map
of proper polyhedral divisors $(\varphi_{G},\mathrm{id},1)$
defined as in \ref{sub:Maps-of-proper}.\end{rem}
\begin{proof}
(of Theorem \ref{thm:Main_TH})
Consider a finite abelian group
$G$ acting on $X=\mathbb{S}(Y,\mathcal{D})$
whose action commutes with that
of $\mathbb{T}$. By virtue of
lemmas \ref{G Y} and \ref{G inv divisor},
we may assume that $G$ acts on
$Y$ and that $\mathcal{D}$ is
$G$-invariant. Then we let $H$
be the subgroup of $G\times\mathbb{T}$
consisting of elements which act
trivially on $X$. We let $G_{0}\subset G$
and $\mathbb{T}_{0}\subset\mathbb{T}$
be the images of $H$ by the two
projections and we let $G'=G/G_{0}$
and $\mathbb{T}'=\mathbb{T}/\mathbb{T}_{0}$.
Applying lemma \ref{G in T} to
$X$ equipped with the action of
$G_{0}$, we obtain a variety $X/\!/G_{0}$
endowed with an effective action
of $G'\times\mathbb{T}'$ to which
the lemma \ref{Effective} can
be applied. Any map $(\varphi_{G},F,1)$
is obtained by composing maps of
the two types above. 
\end{proof}

\section{Applications in the case $\mathbb{T}=\mathbb{C}^{*}$}

\subsection{Basic examples of $\mathbb{C}^{*}$-actions}

The coordinate ring of a normal
affine variety $X=\mathrm{Spec}(A)$
equipped with an effective $\mathbb{C}^{*}$-action
is $\mathbb{Z}$-graded in a natural
way via $A=\bigoplus_{n\in\mathbb{Z}}A_{n}$
where $A_{n}:=\left\{ f\in A/f(\lambda\cdot x)=\lambda^{n}f(x)\right\} $.
The semi-projective variety associated
to the Altmann-Hausen representation
of $X$ is the irreducible component
which correspond to the normalization
of the closure of the image of
$W$ by $q'$ (see \ref{-the-semi-projective})
in the fiber product :

\[
Y(X):=Y_{-}(X)\underset{Y_{0}(X)}{\times}Y_{+}(X)
\]

\noindent \begin{flushleft}
where $Y_{0}(X)=X/\!/\mathbb{C}^{*}=\mathrm{Spec}(A_{0})$
, $Y_{\pm}(X)=\mathrm{Proj}_{A_{0}}(\underset{n\in\mathbb{Z}_{\geq0}}{\bigoplus}A_{\pm n})$.
\par\end{flushleft}

A $\mathbb{C}^{*}$-action said
to be \emph{hyperbolic} if there
is at least one $n_{1}<0$ and
one $n_{2}>0$ such that $A_{n_{1}}$
and $A_{n_{2}}$ are nonzero. In
this case, the tail cone $\sigma$
is equal to $\{0\}$ (see \ref{sub:Determining-proper-polyhedral}
). If in addition $X$ is smooth,
then $Y(X)$ is in fact equal to
the fiber product which is itself
isomorphic to the blow-up of $Y_{0}(X)$
with center at th closed suscheme
defined by the ideal $\mathcal{I}=\left\langle A_{d}.A_{-d}\right\rangle $
where $d>0$ is chosen so that
$\bigoplus_{n\in\mathbb{Z}}A_{dn}$
is generated by $A_{0}$ and $A_{\pm d}$
( \cite{T} Theorem 1.9 and proposition
1.4).

In what follows, we denote by $\pi:\tilde{\mathbb{A}}_{(I)}^{n}\rightarrow\mathbb{A}^{n}$
the blow-up of the ideal $(I)$
in $\mathbb{A}_{(x_{1},...,x_{n})}^{n}=\mathrm{Spec}(\mathbb{C}[x_{1},...,x_{n}])$
.\\

Given an irreducible and reduced
hypersurface $H=\left\{ f(x_{1},...,x_{n})=0\right\} \subset\mathbb{A}^{n}$
containing the origin, the hypersurface
$X_{n,p,f}$ of $\mathbb{A}^{n+2}=\mathrm{Spec}(\mathbb{C}[x_{1},...,x_{n}][y,t])$
defined by the equation 
\[
\frac{f(x_{1}y,...,x_{n}y)}{y}+t^{p}=0
\]
comes equipped with an effective
$\mathbb{C}^{*}$-action induced
by the linear one $\lambda\cdot(x_{1},...,x_{n},y,t)=(\lambda^{p}x_{1},...,\lambda^{p}x_{n},\lambda^{-p}y,\lambda t)$
on $\mathbb{A}^{n+2}$. We have
$\mathbb{A}^{n+2}/\!/\mathbb{C}^{*}\simeq\mathbb{A}^{n+1}=\mathrm{Spec}(\mathbb{C}[u_{1},...,u_{n+1}])$
via $u_{i}=x_{i}y$ for $i=1,...,n$
and $u_{n+1}=yt^{p}$.
\begin{prop}
\label{building block} The variety
$X_{n,p,f}$ is equivariantly isomorphic
to $\mathbb{S}(\tilde{\mathbb{A}}_{(u_{1},...,u_{n})}^{n},\mathcal{D})$
for with $\mathcal{D}=\left\{ \frac{1}{p}\right\} D+[0,\frac{1}{p}]E$,
where $E$ is the exceptional divisor
of the blow up and $D$ is the
strict transform of the hypersurface
$H\subset\mathbb{A}^{n}$. \end{prop}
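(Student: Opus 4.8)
Here is a plan for proving Proposition~\ref{building block}.

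The plan is to apply the procedure of Section~\ref{sub:Determining-proper-polyhedral} to the closed embedding $X_{n,p,f}\subset\mathbb A^{n+2}=\mathrm{Spec}(\mathbb C[x_1,\dots,x_n,y,t])$, which is stable under the linear action $\lambda\cdot(x,y,t)=(\lambda^p x,\lambda^{-p}y,\lambda t)$, and then to match the combinatorial output against the divisor $\mathcal D$ in the statement. First I would record the algebra. Because $f$ has no constant term, $g=\tfrac1y f(x_1y,\dots,x_ny)+t^p$ is a genuine polynomial, semi-invariant of weight $p$ and monic of degree $p$ in $t$; hence $A=\mathbb C[X_{n,p,f}]$ is free over $\mathbb C[x,y]$ with basis $1,t,\dots,t^{p-1}$, for the grading $\deg x_i=p$, $\deg y=-p$, $\deg t=1$. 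A monomial computation then yields $A_0=\mathbb C[u_1,\dots,u_n]$ with $u_i=x_iy$ (using $yt^p=-f(u_1,\dots,u_n)$ on $X$), so that $Y_0(X)=\mathbb A^n=\mathrm{Spec}(\mathbb C[u_1,\dots,u_n])$; and, writing $m=pq+r$ with $0\le r<p$, one gets $A_m=(\mathbb C[x,y])_{pq}\,t^r$, where $(\mathbb C[x,y])_{pq}$ is $\tfrac1{y^q}\mathfrak m_0^q$ for $q\ge 0$ and a free rank-one $\mathbb C[u]$-module for $q\le 0$ (here $\mathfrak m_0=(u_1,\dots,u_n)$). Hence $\bigoplus_{q\ge0}A_{pq}$ is, after rescaling the grading, the Rees algebra of $\mathfrak m_0$ and $\bigoplus_{q\ge0}A_{-pq}=\mathbb C[u][y]$, so $Y_+(X)=\tilde{\mathbb A}^n_{(u_1,\dots,u_n)}$, $Y_-(X)=\mathbb A^n$, and $Y(X)=Y_-(X)\times_{Y_0(X)}Y_+(X)=\tilde{\mathbb A}^n_{(u_1,\dots,u_n)}$, which is smooth and semi-projective. (Equivalently, the action being hyperbolic gives tail cone $\sigma=\{0\}$, and for $d=p$ one has $\langle A_d A_{-d}\rangle=\mathfrak m_0$, identifying $Y(X)$ with $\mathrm{Bl}_{\mathfrak m_0}\mathbb A^n$.)

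Next I would run the divisor-and-polytope part of the recipe. The action is encoded by $F\colon\mathbb Z\to\mathbb Z^{n+2}$, $1\mapsto(p,\dots,p,-p,1)$; a cokernel is $P\colon\mathbb Z^{n+2}\to\mathbb Z^{n+1}$ with $P(e_i)=e_i$ for $1\le i\le n+1$ and $P(e_{n+2})=p\,(-1,\dots,-1,1)$, and $s=(\text{last coordinate})$ is a section. Solving $\Pi=s\bigl(\mathbb R^{n+2}_{\ge0}\cap P^{-1}(v)\bigr)$ for the primitive generators $v$ of the rays yields polytope $\{0\}$ for the $n$ divisors coming from the $\{x_i=0\}$, polytope $[0,\tfrac1p]$ for the one coming from $\{y=0\}$, polytope $\{\tfrac1p\}$ for the one coming from $\{t=0\}$, and $\sigma=\{0\}$. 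Only two divisors thus survive, and they must be identified on the blow-up: $\{x_1=\dots=x_n=0\}$ is exactly the center $\{u_1=\dots=u_n=0\}$ of the blow-up, so the $\{y=0\}$-divisor pulls back to $E$; and since $t^p=-f(u_1,\dots,u_n)/y$ on $X$, the invariant divisor supported on $\{t=0\}$ maps to the divisor of $f$ on $Y_0=\mathbb A^n$, hence lifts to the strict transform $D$ of $\{f=0\}$ in $\tilde{\mathbb A}^n_{(u)}$. This gives $\mathcal D=\{\tfrac1p\}D+[0,\tfrac1p]E$.

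Finally I would confirm that $\mathbb S(\tilde{\mathbb A}^n_{(u)},\mathcal D)$ is $\mathbb C^*$-equivariantly isomorphic to $X_{n,p,f}$ by matching graded pieces, with $t$ as the weight-one plurifunction: for $m>0$, $\mathcal D(m)=\tfrac mp D$, so $\lfloor\mathcal D(m)\rfloor=\lfloor m/p\rfloor D$ and, using $\mathrm{div}_{Y}(f)=D+E$, one computes $\Gamma\bigl(\tilde{\mathbb A}^n,\mathcal O(\lfloor m/p\rfloor D)\bigr)=\tfrac1{f^{\lfloor m/p\rfloor}}\mathfrak m_0^{\lfloor m/p\rfloor}=t^{-m}A_m$; for $m<0$, $\mathcal D(m)=-\tfrac{|m|}{p}(D+E)$, so $\lfloor\mathcal D(m)\rfloor=-\lceil|m|/p\rceil(D+E)$ and $\Gamma\bigl(\tilde{\mathbb A}^n,\mathcal O(-\lceil|m|/p\rceil(D+E))\bigr)=f^{\lceil|m|/p\rceil}\mathbb C[u]=t^{-m}A_m$; and for $m=0$ both sides equal $\mathbb C[u]$. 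One also verifies that $\mathcal D$ is a pp-divisor: $D\sim_{\mathbb Q}-E$ and $\mathcal O_{\tilde{\mathbb A}^n}(-E)$ is globally generated by $u_1,\dots,u_n$ (semi-ampleness), while for $m\neq0$ suitable sections cut out affine opens $\pi^{-1}(\mathbb A^n\setminus\{u_i=0\})$ (bigness). Since the identifications of graded pieces are compatible with multiplication, this produces the desired equivariant isomorphism. The main obstacle is this last step together with the identification preceding it: checking that the invariant divisor over $\{t=0\}$ is the strict transform $D$ and not $D+E$, and then carrying the round-downs $\lfloor\mathcal D(m)\rfloor$ through the module computation — it is precisely here that the relation $\mathrm{div}_{Y}(f)=D+E$, i.e.\ that $\{f=0\}$ passes through the blown-up point with multiplicity one, is used. (Alternatively, one may argue via Lemma~\ref{G in T}: $X_{n,p,f}$ is the $\mu_p$-cyclic cover $t^p=-f(u)/y$ of the hyperbolic $\mathbb C^*$-variety $\mathbb A^{n+1}_{x,y}$, with $\mu_p\subset\mathbb C^*$ the $p$-torsion; since the map $F_*$ of that lemma is multiplication by $p$ on $N_{\mathbb Q}=\mathbb Q$, an Altmann--Hausen presentation of $\mathbb A^{n+1}_{x,y}$ on $\tilde{\mathbb A}^n_{(u)}$ determines $\mathcal D$ by dividing all of its polytopes by $p$.)
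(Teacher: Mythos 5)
Your proposal is correct, and its core --- the toric downgrade of $\mathbb{A}^{n+2}$ through the exact sequence $0\to\mathbb{Z}\xrightarrow{F}\mathbb{Z}^{n+2}\xrightarrow{P}\mathbb{Z}^{n+1}\to0$ with $F={}^{t}(p,\dots,p,-p,1)$, the section $s$, and the formulas $\sigma=s(\mathbb{Q}_{\geq0}^{n+2}\cap F(\mathbb{Q}))$, $\Pi_{i}=s(\mathbb{R}_{\geq0}^{n+2}\cap P^{-1}(v_{i}))$ --- is exactly the paper's proof, with the same output ($\{0\}$ for the $\{x_{i}=0\}$ divisors, $[0,\tfrac1p]$ for $\{y=0\}$, $\{\tfrac1p\}$ for $\{t=0\}$). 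You deviate in two places, both defensibly. First, you identify $Y\simeq\tilde{\mathbb{A}}_{(u_{1},\dots,u_{n})}^{n}$ by computing the graded pieces $A_{m}$ and recognizing $\bigoplus_{q\geq0}A_{pq}$ as the Rees algebra of $\mathfrak{m}_{0}$, whereas the paper realizes $Y$ as the strict transform of $\{f(u)+u_{n+1}=0\}$ inside the toric blow-up of $\mathbb{A}^{n+1}$; the two computations agree. Second, you append a direct verification that $\Gamma(Y,\mathcal{O}(\mathcal{D}(m)))=t^{-m}A_{m}$ for every $m$, which the paper omits entirely, trusting the general recipe. That extra check is genuinely useful: it surfaces a hypothesis the statement leaves implicit, namely that the identification hinges on $\mathrm{div}_{Y}(f)=D+E$, i.e.\ on $H$ having multiplicity one at the origin. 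If $\mathrm{mult}_{0}H=k\geq2$ one finds $\Gamma(\mathcal{O}(qD))=f^{-q}\mathfrak{m}_{0}^{qk}\neq t^{-pq}A_{pq}=f^{-q}\mathfrak{m}_{0}^{q}$, so the stated coefficient of $E$ would have to be modified; all of the paper's applications ($f=u+v+v^{d}$, $f=v+(u+v^{d})^{l}$, etc.) have multiplicity one, so nothing downstream is affected, but you are right to isolate this as the delicate step. Your alternative argument via Lemma~\ref{G in T}, presenting $X_{n,p,f}$ as a $\mu_{p}$-cover and dividing the polytopes by $p$, is also sound and is in fact closer in spirit to how the paper subsequently treats the Koras--Russell threefolds.
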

\begin{proof}
We determine $Y(X_{n,p,f})$ and
the pp-divisor $\mathcal{D}$ using
the method described in sections
1.2 and 1.3. We consider the exact
sequence :

\[
\xymatrix{0\ar[r] & \mathbb{Z}\ar[r]_{F} & \mathbb{Z}^{n+2}\ar[r]_{P}\ar@/_{1pc}/[l]_{s} & \mathbb{Z}^{n+1}\ar[r] & 0}
\]

\noindent \begin{flushleft}
where $F=\overset{}{}^{t}(p,...,p,-p,1)$,
$P=\left(\begin{array}{ccccc}
 &  &  & 1 & 0\\
 & \mathrm{I}_{n} &  & \vdots & \vdots\\
 &  &  & 1 & 0\\
0 & \cdots & 0 & 1 & p
\end{array}\right)$ $\mathrm{I}_{n}$ being the identity
matrix of rank $n\times n$ and
$s=(0,...,0,1)$.
\par\end{flushleft}

The fan in $\mathbb{Z}^{n+2}$
is generated by the rays $\left\{ v_{i}\right\} _{i=1,..,n+2}$
where $v_{i}$ is the first integral
vector of the unidimensional cone
generated by the i-th column vector
of $P$. It corresponds to the
blow up of the origin in $\mathbb{A}^{n+1}$,
as a toric variety.

The variety $Y$ is equal to the
strict transform by $\pi:\tilde{\mathbb{A}}_{(u_{1},...,u_{n})}^{n+1}\rightarrow\mathbb{A}^{n+1}\simeq\mathbb{A}^{n+2}/\!/\mathbb{C}^{*}$
of ${\displaystyle {\displaystyle \left\{ f(u_{1},...,u_{n})+u_{n+1}=0\right\} \subset\mathbb{A}^{n+1}}}$,
thus $Y\simeq\tilde{\mathbb{A}}^{n}$.

Since $\sigma:=s(\mathbb{Q}_{\geq0}^{m}\cap F(\mathbb{Q}))$
is $\{0\}$, applying the formula
$\Pi_{i}=s(\mathbb{R}_{\geq0}^{m}\cap P^{-1}(v_{i}))$,
we deduce that $\mathcal{D}$ has
the form $\left\{ \frac{1}{p}\right\} D+[0,\frac{1}{p}]E$,
where $D$ corresponds to the restriction
to $Y$ of the toric divisor given
by the ray $v_{n+2}$. It is the
restriction of $\left\{ u_{n+1}=yt^{p}=0\right\} $
to $Y$ thus $D$ is the strict
transforms of the hypersurface
$H\subset\mathbb{A}^{n}$. The
divisor $E$ corresponds to the
restriction to $Y$ of the toric
divisor given by $v_{n+1}$, that
is, the exceptional divisor.\end{proof}
\begin{example}
\label{3.1}Specializing the above
construction we obtain examples
of linear hyperbolic $\mathbb{C}^{*}$-actions
on $\mathbb{A}^{3}$ which will
be building blocks for further
applications :

a) Choosing $n=2$ and $f(x_{1},x_{2})=x_{1}$,
we obtain that $X_{2,x_{1},p}$
is isomorphic to $\mathbb{S}(\tilde{\mathbb{A}}_{(u,v)}^{2},\mathcal{D})$
with $\mathcal{D}=\left\{ \frac{1}{p}\right\} D+[0,\frac{1}{p}]E$,
where $E$ is the exceptional divisor
of the blow up and $D$ is the
strict transform of the line $\{u=0\}\subset\mathbb{A}^{2}$.
Thus $X_{2,x_{1},p}\subset\mathbb{A}^{4}$
is isomorphic to $\mathbb{A}^{3}$
equipped with the $\mathbb{C}^{*}$-action
: $\lambda\cdot(x_{2},y,t)=(\lambda^{p}x_{2},\lambda^{-p}y,\lambda t)$.

b) In particular, if $p=1$ then
$X_{2,x_{1},1}$ is isomorphic
to $\mathbb{S}(\tilde{\mathbb{A}}_{(u,v)}^{2},\mathcal{D})$
with $\mathcal{D}=\left\{ 1\right\} D+[0,1]E$.
Since $\mathcal{D}=\left\{ 1\right\} D+[0,1]E$
is equivalent to $\mathcal{D}'=[-1,0]E$,
we have that $X_{2,x_{1},1}$ is
equivariantly isomorphic to $\mathbb{S}(\tilde{\mathbb{A}}_{(u,v)}^{2},\mathcal{D}')$. 
\end{example}
\noindent
\begin{example}
\label{3.2}Choosing $n=2$ and
$f(x_{1},x_{2})=x_{1}+(x_{1}^{d}+x_{2}^{d})^{l}$
yields that 
\[
X_{2,p,f}=\{x_{1}+y^{dl-1}(x_{1}^{d}+x_{2}^{d})^{l}+t^{p}=0\}\subset\mathbb{A}^{4}
\]
 is isomorphic to $\mathbb{S}(\tilde{\mathbb{A}}_{(u,v)}^{2},\mathcal{D}=\left\{ \frac{1}{p}\right\} D+[0,\frac{1}{p}]E)$
, where $E$ is the exceptional
divisor of the blow up and $D$
is the strict transform of the
curve $\{v+(v^{d}+u^{d})^{l}=0\}\subset\mathbb{A}^{2}$.
Note that in contrast with the
previous example, $X_{2,p,f}$
is not isomorphic to $\mathbb{A}^{3}$.
Indeed, if it were, then by the
result of Koras-Russell \cite{Ka-K-ML-R},
the $\mathbb{C}^{*}$ action on
$X_{2,p,f}$ would be linearizable.
By considering the linear action
induced on the tangent space of
the fixed point, we find that $X_{2,p,f}$
would have to be equivariantly
isomorphic to $X_{2,x_{1},p}$
for some $p$. On the other hand
it follows from \cite[corollary 8.12]{A-H}
that two pp-divisors $\mathcal{D}_{i}$,
defined on $Y_{i}$ respectively
with the same tail cone, define
equivariantly isomorphic varieties
$\mathbb{S}(Y_{i},\mathcal{D}_{i})$
if and only if there exist projective
birational morphisms $\psi_{i}:Y_{i}\rightarrow Y$
and a pp-divisor $\mathcal{D}$
on $Y$ such that $\mathcal{D}_{i}\simeq\psi_{i}^{*}(\mathcal{D})$
$i=1,2$. This would induce an
automorphism $\phi$ of $\tilde{\mathbb{A}}^{2}$,
such that $\phi^{*}(f)=x_{1}$,
which is not possible, since a
general fiber of $f$ is singular.
\end{example}

\subsection{Koras-Russell threefolds}

Smooth affine, contractible threefolds
with a hyperbolic $\mathbb{C}^{*}$-action
whose quotient is isomorphic to
$\mathbb{A}^{2}/G$ where $G$
is a finite cyclic group have been
classified by Koras and Russell
\cite{K-R}, in the context of
the linearization problem for $\mathbb{C}^{*}$-actions
on $\mathbb{A}^{3}$ \cite{Ka-K-ML-R}.
These threefolds, which we call
Koras-Russell threefolds, provide
examples of $\mathbb{T}$-varieties
of complexity two. According to
\cite{Ka-ML} they admit the following
description:

Let $a'$,$b'$ and $c'$ be pairwise
prime natural numbers with $b'\geq c'$
and let $\mu_{a'}$, the group
of $a'$-th roots of unity, act
on $\mathbb{A}^{2}=\mathrm{Spec}(\mathbb{C}[u,v])$
by $(u,v)\to(\lambda^{c'}u,\lambda^{b'}v)$
where $\lambda\in\mu_{a'}$. Consider
a semi-invariant polynomial $f$
of weight congruent to $b'$ modulo
$a'$ and with the property that
$L=\{f=0\}$ is isomorphic to a
line and meets the axis $u=0$
transversely at the origin and
at $r-1\geq1$ other points. With
these assumptions the polynomial
$s^{-c'}f(s^{c'}u,s^{b'}v)$ can
be rewritten in the form $F(w,u,v)$
with $w=s^{a'}$where $F$ is semi-invariant
of weight $b'$ for the $\mathbb{C}^{*}$-action
$(w,u,v)\longmapsto(\lambda^{-a'}w,\lambda^{c'}u,\lambda^{b'}v)$.
Then for any choice of pairwise
prime integers $(\alpha_{1},\alpha_{2},\alpha_{3})$
such that $\gcd(\alpha_{1},a')=\gcd(\alpha_{2},b')=\gcd(\alpha_{2},c')=1$,
the hypersurface $X=\{(x,y,z,t)\in\mathbb{A}^{4}/t^{\alpha_{3}}+F(y^{\alpha_{1}},z^{\alpha_{2}},x)=0\}$
is a Koras-Russell threefold.

Here we mainly consider two families
of such threefolds: 

1) The first kind is defined by
equations of the form: 
\[
\{x+x^{d}y+z^{\alpha_{^{2}}}+t^{\alpha_{3}}=0\},
\]
 where $2\leq d$, $2\leq\alpha_{2}<\alpha_{3}$
with $\gcd(\alpha_{2},\alpha_{3})=1$
and equipped with the $\mathbb{C}^{*}$-action
induced by the linear one on $\mathbb{A}^{4}$
with weights $(\alpha_{2}\alpha_{3},-(d-1)\alpha_{2}\alpha_{3},\alpha_{3},\alpha_{2})$.
These correspond to the choice
of $f=u+v+v^{d}$.

2) The second type is defined by
\[
\{x+y(x^{d}+z^{\alpha_{^{2}}})^{l}+t^{\alpha_{3}}=0\},
\]
 where $2\leq d$, $1\leq l$ ,
$2\leq\alpha_{2}<\alpha_{3}$ with
$\gcd(\alpha_{2},d)=\gcd(\alpha_{2},\alpha_{3})=1$
and equipped with the $\mathbb{C}^{*}$-action
induced by the linear one on $\mathbb{A}^{4}$
with weights $(\alpha_{2}\alpha_{3},-(dl-1)\alpha_{2}\alpha_{3},d\alpha_{3},\alpha_{2})$.
These correspond to the choice
of $f=v+(u+v^{d})^{l}$.\\

To obtain the Altmann-Hausen representation
for these threefolds, we will exploit
the fact that they arise as $\mathbb{C}^{*}$-equivariant
bi-cyclic covers of $\mathbb{A}^{3}$.
We will see that the polyhedral
coefficients are related with the
choice of $(\alpha_{1},\alpha_{2},\alpha_{3})$
and the divisors are related with
the choice of the fiber $L=\{f=0\}$
in the construction above.

\subsection{The Russell Cubic}

We begin with the Russell cubic
$X=\{x+x^{2}y+z^{2}+t^{3}=0\}$
in $\mathbb{A}^{4}=\mathrm{Spec}(\mathbb{C}[x,y,z,t])$
which corresponds to the choice
$a'=b'=c'=1$, $\alpha_{1}=1$,
$\alpha_{2}=2$, $\alpha_{3}=3$
and $f(u,v)=u+v+v^{2}$ in the
construction above. By construction
$X$ is equiped with the $\mathbb{C}^{*}$-action
induced by the linear one on $\mathbb{A}^{4}$
with weights $(6,-6,3,2)$. The
algebraic quotient $X/\!/\mathbb{C}^{*}$
is isomorphic to $\mathbb{A}_{(u,v)}^{2}=\mathrm{Spec}(\mathbb{C}[u,v])$
where $u=yz^{2}$ and $v=yx$.
\begin{prop}
(see also \cite{I-V}) The Russell
Cubic $X$ is isomorphic to $\mathbb{S}(\tilde{\mathbb{A}}_{(u,v)}^{2},\mathcal{D})$
for 
\[
\mathcal{D}=\left\{ \frac{1}{2}\right\} D_{3}+\left\{ -\frac{1}{3}\right\} D_{2}+\left[0,\frac{1}{6}\right]E,
\]
where $E$ is the exceptional divisor
of $\pi:\tilde{\mathbb{A}}_{(u,v)}^{2}\rightarrow\mathbb{A}^{2}$,
and where $D_{2}$ and $D_{3}$
are the strict transforms of the
curves $\left\{ u=0\right\} $
and $\left\{ u+v+v^{2}=0\right\} $
in $\mathbb{A}^{2}$ respectively.\end{prop}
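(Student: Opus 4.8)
The plan is to run the explicit recipe of sections~\ref{-the-semi-projective} and~\ref{sub:Determining-proper-polyhedral} on the embedding $X\subset\mathbb{A}^{4}=\mathrm{Spec}(\mathbb{C}[x,y,z,t])$ carrying the linear $\mathbb{C}^{*}$-action of weights $(6,-6,3,2)$, exactly as in the proof of Proposition~\ref{building block}; the bi-cyclic cover structure discussed above will serve to motivate the shape of the answer and to cross-check it. First I would pin down the base $Y(X)$. The weight-zero part of $\mathbb{C}[x,y,z,t]$ is $\mathbb{C}[xy,yz^{2},yt^{3}]$, and multiplying the defining equation by $y$ yields the relation $yt^{3}=-(xy+(xy)^{2}+yz^{2})$, whence $X/\!/\mathbb{C}^{*}\cong\mathbb{A}_{(u,v)}^{2}$ with $u=yz^{2}$, $v=yx$. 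Since $X$ is smooth, $Y(X)$ equals the fibre product $Y_{-}(X)\times_{Y_{0}(X)}Y_{+}(X)$, which in turn is the blow-up of $Y_{0}(X)=\mathbb{A}_{(u,v)}^{2}$ along $\langle A_{d}A_{-d}\rangle$ for $d=6$; one checks that this ideal is the maximal ideal of the origin, so $Y(X)=\tilde{\mathbb{A}}_{(u,v)}^{2}$ with exceptional divisor $E$.

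Next I would compute the pp-divisor by the toric method of~\ref{sub:Determining-proper-polyhedral}. Take the exact sequence $0\to\mathbb{Z}\xrightarrow{F}\mathbb{Z}^{4}\xrightarrow{P}\mathbb{Z}^{3}\to0$ with $F={}^{t}(6,-6,3,2)$ and $P$ the matrix whose rows are the exponent vectors of $xy$, $yz^{2}$, $yt^{3}$; the ambient toric variety is the blow-up of $\mathbb{A}_{(xy,yz^{2},yt^{3})}^{3}$ at the origin, whose rays are the primitive generators $(1,0,0)$, $(1,1,1)$, $(0,1,0)$, $(0,0,1)$ of the columns of $P$. Restricting the corresponding toric divisors to $Y$: the ray $(1,1,1)$ gives $E$; the rays coming from the $z$- and $t$-columns give the strict transforms, inside $\mathbb{A}_{(u,v)}^{2}$, of $\{yz^{2}=0\}$ and of $\{yt^{3}=0\}$, that is of $\{u=0\}$ and of $\{u+v+v^{2}=0\}$ (using the relation above), which are $D_{2}$ and $D_{3}$; the ray from the $x$-column gives the strict transform of $\{xy=0\}$, with trivial polyhedral coefficient. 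Since the action is hyperbolic the tail cone is $\{0\}$, and evaluating $\Pi_{\rho}=s(\mathbb{R}_{\geq0}^{4}\cap P^{-1}(v_{\rho}))$ for an integral section $s$ of $F$ produces the point-coefficients $1/\alpha_{2}=1/2$ and $-1/\alpha_{3}=-1/3$ on the two curve-divisors together with the segment $[0,1/(\alpha_{2}\alpha_{3})]=[0,1/6]$ on $E$, which after passing to the standard representative is the asserted $\mathcal{D}$.

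Finally, I would use the cyclic-cover picture as both heuristic and verification. The commuting automorphisms $z\mapsto-z$ and $t\mapsto\zeta_{3}t$ generate a group $G\simeq\mathbb{Z}/6$, realized inside $\mathbb{C}^{*}=\mathbb{T}$ by $\zeta_{6}$, so $X$ is the equivariant bi-cyclic cover of $X/\!/G$ and $G$ acts trivially on $Y$; one computes $X/\!/G\cong\mathbb{A}^{3}$ with the hyperbolic weights $(1,-1,1)$, which by Proposition~\ref{building block} is $\mathbb{S}(\tilde{\mathbb{A}}_{(u,v)}^{2},\{1\}D_{3}+[0,1]E)$. Lemma~\ref{G in T} then forces $F_{*}(\mathcal{D})$ — obtained from $\mathcal{D}$ by scaling every polyhedral coefficient by $6$, $F$ being multiplication by $6$ — to be equivalent to $\{1\}D_{3}+[0,1]E$, which one checks by exhibiting the suitable monomial plurifunction in $u$ and $u+v+v^{2}$ on $\tilde{\mathbb{A}}_{(u,v)}^{2}$. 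The delicate point is that Lemma~\ref{G in T} read in reverse does not determine $\mathcal{D}$ uniquely: all the $\tfrac{1}{6}$-fractional lifts of $\{1\}D_{3}+[0,1]E$ push forward to equivalent divisors yet give genuinely different sixfold covers, so one still has to single out the lift that is $X$ itself. That is the heart of the argument, and it is precisely the direct polytope computation of the preceding paragraph — equivalently, the bookkeeping of the ramification indices $2$ along $\{z=0\}$ and $3$ along $\{t=0\}$ of the cover $X\to X/\!/G$ — that accomplishes it.
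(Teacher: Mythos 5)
Your argument is essentially correct, but it takes a genuinely different route from the paper's, and the place where you locate ``the heart of the argument'' is exactly where the paper has a cleaner mechanism that you do not use. The paper never performs the ambient toric computation of \ref{sub:Determining-proper-polyhedral} on $X\subset\mathbb{A}^{4}$; instead it resolves the ambiguity among the $\tfrac{1}{6}$-fractional lifts by using the two \emph{partial} quotients rather than the total one: $X_{2}=X/\!/\mu_{2}$ and $X_{3}=X/\!/\mu_{3}$ are each isomorphic to $\mathbb{A}^{3}$ with linear hyperbolic actions of weights $(3,-3,1)$ and $(2,-2,1)$, so Proposition \ref{building block} gives their pp-divisors $\mathcal{D}_{2}$ and $\mathcal{D}_{3}$ on the common base $\tilde{\mathbb{A}}_{(u,v)}^{2}$, and Theorem \ref{thm:Main_TH} (via Lemma \ref{G in T}) yields $2\mathcal{D}\sim\mathcal{D}_{2}$ and $3\mathcal{D}\sim\mathcal{D}_{3}$; since $3-2=1$ one simply reads off $\mathcal{D}=\mathcal{D}_{3}-\mathcal{D}_{2}$. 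This is the precise form of the ``ramification bookkeeping'' you gesture at in your last sentence, and it makes the quotient by the full group $\mu_{6}$ unnecessary. Your substitute --- the direct polytope computation on the weight-$(6,-6,3,2)$ embedding --- does work: the fibres $\mathbb{R}_{\geq0}^{4}\cap P^{-1}(v_{\rho})$ over the four rays are the point $(1,0,0,0)$, the segment from $(1,0,\tfrac{1}{2},\tfrac{1}{3})$ to $(0,1,0,0)$, and the points $(0,0,\tfrac{1}{2},0)$ and $(0,0,0,\tfrac{1}{3})$, which under the section $s=(0,0,1,-1)$ give $\{0\}$, $[0,\tfrac{1}{6}]$, $\{\tfrac{1}{2}\}$ and $\{-\tfrac{1}{3}\}$ respectively. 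So your proof is a legitimate alternative; but as written it carries the whole weight on this computation while presenting it as a cross-check, and the cover picture in your final paragraph, which uses only $X/\!/\mu_{6}$, genuinely does not close the gap on its own, as you yourself observe.

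One caveat on the coefficients. Carried out carefully, your computation (and equally the subtraction $\mathcal{D}_{3}-\mathcal{D}_{2}$) places $\{\tfrac{1}{2}\}$ on the strict transform of $\{u=0\}=\{yz^{2}=0\}$ and $\{-\tfrac{1}{3}\}$ on the strict transform of $\{u+v+v^{2}=0\}=\{yt^{3}=0\}$, i.e.\ the transpose of the displayed formula. Your claim that this ``after passing to the standard representative is the asserted $\mathcal{D}$'' cannot be literally realized: adding $\mathrm{div}(f)$ shifts the point coefficients by integers, so their classes modulo $\mathbb{Z}$ ($\tfrac{1}{2}$ on the one curve, $\tfrac{2}{3}$ on the other) are invariants and cannot be interchanged this way. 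The statement of the proposition nevertheless survives, because the automorphism $(u,v)\mapsto(-(u+v+v^{2}),v)$ of $\mathbb{A}^{2}$ fixes the origin, lifts to $\tilde{\mathbb{A}}_{(u,v)}^{2}$, exchanges $D_{2}$ and $D_{3}$ and fixes $E$, hence identifies the two pp-divisors' associated $\mathbb{T}$-varieties. You should either invoke that automorphism explicitly or state the result with the coefficients attached as your computation actually produces them.
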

\begin{proof}
The two projections $\Phi_{2}=\mathrm{pr}_{x,y,t}:X\rightarrow X_{2}=\mathbb{A}^{3}$
and $\Phi_{3}=\mathrm{pr}_{x,y,z}:X\rightarrow X_{3}=\mathbb{A}^{3}$
express $X$ as cyclic Galois covers
of $\mathbb{A}^{3}$ of degrees
$2$ and $3$ respectively, whose
Galois groups $\mu_{2}$ and $\mu_{3}$
act on $X$ by $\xi\cdot(x,y,z,t)=(x,y,\xi z,t)$
and $\zeta\cdot(x,y,z,t)=(x,y,z,\zeta t)$
respectively. Furthermore these
two actions commute and the quotient
$X_{6}=X/\!/(\mu_{2}\times\mu_{3})$
is isomorphic to $\mathbb{A}^{3}=\mathrm{Spec}(\mathbb{C}[x,y,z^{2}])$.
Letting $A=\underset{n\in\mathbb{Z}}{\bigoplus}A_{n}$
be the coordinate ring of $X$
equipped with the grading corresponding
to the given $\mathbb{C}^{*}$-action,
we have in fact $X_{\ell}=\mathrm{Spec}(\underset{n\in\mathbb{Z}}{\bigoplus}A_{\ell n})$,
$\ell=2,3,6$. This yields a $\mathbb{C}^{*}$-equivariant
commutative diagram

\[
\xymatrix{ & X\ar[dr]^{\Phi_{3}}\ar[dl]_{\Phi_{2}}\ar[dd]_{\Phi_{6}}\\
X_{2}=X/\!/\mu_{2}\ar[dr] &  & X_{3}=X/\!/\mu_{3}\ar[dl]\\
 & X_{6}=X/\!/(\mu_{2}\times\mu_{3})
}
\]
where $\mathbb{C}^{*}$ acts linearly
on $X_{2}$, $X_{3}$ and $X_{6}$
with weights $(3,-3,1)$, $(2,-2,1)$
and $(1,-1,1)$ respectively.

Furthermore since the action of
$\mu_{2}\times\mu_{3}$ on $X$
factors through that of $\mathbb{C}^{*}$
we deduce from Theorem \ref{thm:Main_TH}
that $\Phi_{2}$ corresponds to
the map of proper polyhedral divisors
$(\mathrm{id},F_{2},1)$ and $\Phi_{3}$
corresponds to the map of proper
polyhedral divisors $(\mathrm{id},F_{3},1)$
where $F_{\ell}^{*}(\mathcal{D})=\ell\mathcal{D}$,
$\ell=2,3,6$. The semi-projective
varieties $Y(X)$ and $Y(X_{\ell})$,
$\ell=2,3,6$ are all isomorphic.
As observed earlier, $A_{0}=\mathbb{C}[u,v]$
with $u=yz$ and $v=yx$ so that
$Y_{0}(X_{6})\simeq Y_{0}(X)=\mathbb{A}_{(u,v)}^{2}$.
We further observe that $A_{-6n}=A_{0}\cdot y^{n}\subset A$
because all semi-invariant polynomials
of negative weights divisible by
$6$ are divisible by $y$. This
implies that $Y_{-}(X_{6})\simeq\mathrm{Proj}(\underset{n\in\mathbb{Z}_{\geq0}}{\bigoplus}A_{0}.y^{n})\simeq Y_{0}(X)$.
Finally, $\underset{n\in\geq0}{\bigoplus}A_{6n}\simeq\mathrm{Sym}_{A_{0}}A_{6}$
where $A_{6}$ is the free $A_{0}$-submodule
of $A$ generated by $x$ and $z$.
Therefore 

\begin{eqnarray*}
Y(X)\simeq Y(X_{6}) & = & Y_{-}(X_{6})\times_{Y_{0}(X_{6})}Y_{+}(X_{6})\simeq Y_{+}(X_{6})
\end{eqnarray*}
is isomorphic to the blow-up $\tilde{\mathbb{A}}_{(u,v)}^{2}$
of $Y_{0}(X)=\mathbb{A}^{2}$ at
the origin. It remains to determine
the pp-divisor $\mathcal{D}$.
We will construct it from those
$\mathcal{D}_{2}$ and $\mathcal{D}_{3}$
corresponding to $X_{2}$ and $X_{3}$
respectively.

By Proposition \ref{building block},
$X_{2}=\mathbb{S}(\tilde{\mathbb{A}}_{(u,v)}^{2}$,
$\mathcal{D}_{2}=\{\frac{1}{3}\}D_{2}+[0,\frac{1}{3}]E)$
where $D_{2}$ is the strict transform
of the curve $\left\{ u=0\right\} $
and $E$ is the exceptional divisor
and $X_{3}=\mathbb{S}(\tilde{\mathbb{A}}_{(u',v)}^{2}$,
$\mathcal{D}_{3}=\{\frac{1}{2}\}D_{3}+[0,\frac{1}{2}]E)$
where $D_{3}$ is the strict transform
of the curve $\left\{ u'=0\right\} $
and $E$ is the exceptional divisor.
Theorem \ref{thm:Main_TH} implies
in turn that $2\mathcal{D}\sim\mathcal{D}_{2}=\{\frac{1}{3}\}D_{2}+[0,\frac{1}{3}]E$
and $3\mathcal{D}\sim\mathcal{D}_{3}=\{\frac{1}{2}\}D_{3}+[0,\frac{1}{2}]E$.
Thus $\mathcal{D}_{2}+\mathcal{D}=\mathcal{D}_{3}$
and we conclude that $\mathcal{D}=\left\{ \frac{1}{2}\right\} D_{3}+\left\{ -\frac{1}{3}\right\} D_{2}+\left[0,\frac{1}{6}\right]E$
.\end{proof}
\begin{rem*}
The choice of the coefficients
is not unique since $\mathcal{D}'\sim\mathcal{D}+\mathrm{div}(f)$
for any rational function $f$
on $Y$. This corresponds for example
to $\mathcal{D}'\sim\mathcal{D}+D_{3}+E$
and more generally for any pair
$(a,b)\in\mathbb{Z}^{2}$ such
that $3a+2b=1$ we have that $\mathcal{D}\sim\left\{ \frac{a}{2}\right\} D_{3}+\left\{ \frac{b}{3}\right\} D_{2}+\left[0,\frac{1}{6}\right]E$.
\end{rem*}

\subsection{Koras Russell threefolds of the
first kind.}

Now we will show that a similar
method can be used to present all
Koras-Russell threefolds of the
form $X=\{x+x^{d}y+z^{\alpha_{^{2}}}+t^{\alpha_{3}}=0\}$
in $\mathbb{A}^{4}=\mathrm{Spec}(\mathbb{C}[x,y,z,t])$.
Namely, we consider a cyclic cover
$V$ of $X$ with algebraic quotient
$V/\!/\mathbb{C}^{*}$ isomorphic
to $\mathbb{A}^{2}=\mathrm{Spec}(\mathbb{C}[u,v])$
where $u=yz^{\alpha_{2}}$ and
$v=yx$. A representation of $V$
is obtained by the same method
as in the previous case and the
representation of $X$ is deduced
by applying again Theorem \ref{thm:Main_TH}. 

The categorical quotient $X/\!/\mathbb{C}^{*}$
is isomorphic to $\mathbb{A}_{(u,v)}^{2}/\!/\mu_{d-1}$
where $\mu_{d-1}$ acts by $\xi\cdot(u,v)=(\xi u,\xi v)$.
So we consider $V$ a finite cyclic
cover of $X$ given by the equation
$X=\{x+x^{d}y^{d-1}+z^{\alpha_{^{2}}}+t^{\alpha_{3}}=0\}$
in $\mathbb{A}^{4}=\mathrm{Spec}(\mathbb{C}[x,y,z,t])$,
equipped with the $\mathbb{C}^{*}$-action
induced by the linear one on $\mathbb{A}^{4}$
with weights $(\alpha_{2}\alpha_{3},-\alpha_{2}\alpha_{3},\alpha_{3},\alpha_{2})$.
Furthermore $\mu_{\alpha_{2}}\times\mu_{\alpha_{3}}\times\mu_{d-1}$
acts on $V$ by $(\zeta,\epsilon,\xi)\cdot(x,y,z,t)\rightarrow(x,\xi y,\zeta z,\epsilon t)$.
Observe that the action of $\mu_{\alpha_{2}}\times\mu_{\alpha_{3}}$
factors through that of $\mathbb{C}^{*}$.
This yields the following diagram
of quotient morphisms: 
\[
\xymatrix{ & V\ar[dr]^{\Phi_{\alpha_{3}}}\ar[dl]_{\Phi_{\alpha_{2}}}\ar[d]_{\Phi_{\mu_{d-1}}}\\
\mathbb{A}^{3}\simeq V/\!/\mu_{\alpha_{2}} & X=V/\!/\mu_{d-1} & \mathbb{A}^{3}\simeq V/\!/\mu_{\alpha_{3}}.
}
\]

By Theorem \ref{thm:Main_TH},
$\Phi_{\alpha_{2}}$ corresponds
to the map of proper polyhedral
divisors $(\mathrm{id},F_{\alpha_{2}},1)$
and $\Phi_{\alpha_{3}}$ corresponds
to the map of proper polyhedral
divisor $(\mathrm{id},F_{\alpha_{3}},1)$
where $F_{\ell}^{*}(\mathcal{D})=\ell\mathcal{D}$,
$\ell=2,3,6$. In addition we obtain
that $Y(V)$ is isomorphic to the
blow-up $\tilde{\mathbb{A}}_{(u,v)}^{2}$
of $\mathbb{A}^{2}$ at the origin
on which $\mu_{\alpha_{2}}\times\mu_{\alpha_{3}}\times\mu_{d-1}$
acts by $(\zeta,\epsilon,\xi)\cdot(u,v)=(\xi u,\xi v)$.
This leads to the following diagram:

\[
\xymatrix{ & Y(V)\ar[dr]^{\simeq}\ar[dl]_{\simeq}\ar[d]_{\varphi_{\mu_{d-1}}}\\
Y(V_{\alpha_{2}}) & Y(X)\simeq Y(V)/\!/\mu_{d-1} & Y(V_{\alpha_{3}}).
}
\]

Using example \ref{3.1} we obtain
Altmann-Hausen representations
of $V/\!/\mu_{\alpha_{2}}$ and
$V/\!/\mu_{\alpha_{3}}$ in the
form $\mathbb{S}(\tilde{\mathbb{A}}_{(u,v)}^{2}$,
$\mathcal{D}_{\alpha_{2}}=\{\frac{1}{\alpha_{3}}\}D_{\alpha_{2}}+[0,\frac{1}{\alpha_{3}}]E)$
where $D_{\alpha_{2}}$ is the
strict transform of the curve $\left\{ u=0\right\} $,
$E$ is the exceptional divisor
and $\mathbb{S}(\tilde{\mathbb{A}}_{(u',v)}^{2}$,
$\mathcal{D}_{\alpha_{3}}=\{\frac{1}{\alpha_{2}}\}D_{\alpha_{3}}+[0,\frac{1}{\alpha_{2}}]E)$
where $D_{\alpha_{3}}$ is the
strict transform of the curve $\left\{ u'=0\right\} $,
$E$ is the exceptional divisor.
This implies that $V$ is isomorphic
to $\mathbb{S}(\tilde{\mathbb{A}}_{(u,v)}^{2},\mathcal{D})$
for 
\[
\mathcal{D}=\left\{ \frac{a}{\alpha_{2}}\right\} D_{\alpha_{3}}+\left\{ \frac{b}{\alpha_{3}}\right\} D_{\alpha_{2}}+\left[0,\frac{1}{\alpha_{2}\alpha_{3}}\right]E\;(*),
\]
where $E$ is the exceptional divisor
of $\pi:\tilde{\mathbb{A}}_{(u,v)}^{2}\rightarrow\mathbb{A}^{2}$,
$D_{\alpha_{2}}$ and $D_{\alpha_{3}}$
are the strict transforms of the
curves $\left\{ u=0\right\} $
and $\left\{ u+v+v^{d}=0\right\} $
in $\mathbb{A}_{(u,v)}^{2}$ respectively,
and $(a,b)\in\mathbb{Z}^{2}$ are
chosen such that $a\alpha_{3}+b\alpha_{2}=1$.
Applying Theorem \ref{thm:Main_TH}
we obtain 
\begin{prop}
The Koras-Russell threefold $X=\{x+x^{d}y+z^{\alpha_{^{2}}}+t^{\alpha_{3}}=0\}$
in $\mathbb{A}^{4}=\mathrm{Spec}(\mathbb{C}[x,y,z,t])$
is isomorphic to $\mathbb{S}(\tilde{\mathbb{A}}_{(u,v)}^{2}/\!/\mu_{d-1},\mathcal{D}')$
for

\[
\mathcal{D}'=\left\{ \frac{a}{\alpha_{2}}\right\} D'_{\alpha_{3}}+\left\{ \frac{b}{\alpha_{3}}\right\} D'_{\alpha_{2}}+\left[0,\frac{1}{(d-1)\alpha_{2}\alpha_{3}}\right]E'
\]
where $\mathcal{D}=\varphi_{\mu_{d-1}}^{*}(\mathcal{D}')$
, $\mathcal{D}$ is defined in
the relation $(*)$ and $D'_{\alpha_{3}}$,$D'_{\alpha_{2}}$
are prime divisors and $E'$ is
the exceptional divisor of the
blow-up of the singularity in $\mathbb{A}^{2}/\!/\mu_{d-1}$.
\end{prop}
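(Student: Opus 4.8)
The plan is to combine the description of $V$ as $\mathbb{S}(\tilde{\mathbb{A}}_{(u,v)}^{2},\mathcal{D})$ with $\mathcal{D}$ as in $(*)$ with the fact, already recorded in the diagram above, that $X = V/\!/\mu_{d-1}$ where $\mu_{d-1}$ acts on $\tilde{\mathbb{A}}_{(u,v)}^{2}$ via the lift of $\xi\cdot(u,v)=(\xi u,\xi v)$. Since this $\mu_{d-1}$-action commutes with the $\mathbb{C}^{*}$-action and the quotient in question is a group action factoring through neither the torus nor trivially, part 2) of Theorem~\ref{thm:Main_TH} (in the guise of Lemma~\ref{Effective}, the effective case) applies directly: there is a pp-divisor $\mathcal{D}'$ on $Y(V)/\!/\mu_{d-1} = \tilde{\mathbb{A}}_{(u,v)}^{2}/\!/\mu_{d-1}$ with $\varphi_{\mu_{d-1}}^{*}(\mathcal{D}') = \mathcal{D}$ and $X \simeq \mathbb{S}(\tilde{\mathbb{A}}_{(u,v)}^{2}/\!/\mu_{d-1},\mathcal{D}')$ $\mathbb{C}^{*}$-equivariantly.

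First I would verify that the induced $\mu_{d-1}$-action on $Y(V)\simeq\tilde{\mathbb{A}}_{(u,v)}^{2}$ is as claimed: tracing through the construction of Section~\ref{-the-semi-projective} and the weights $(\alpha_2\alpha_3,-\alpha_2\alpha_3,\alpha_3,\alpha_2)$ of $V$, one sees that $\mu_{d-1}$ acts on $A_0=\mathbb{C}[u,v]$ by $(u,v)\mapsto(\xi u,\xi v)$ and trivially on the degree-$\pm$ pieces used to build $Y_{\pm}$, so the action on the blow-up is the lift of the diagonal action fixing the exceptional divisor $E$ pointwise. Then I would identify the three prime divisors in $\mathcal{D}$: the exceptional divisor $E$ is $\mu_{d-1}$-invariant and descends to the exceptional divisor $E'$ of the blow-up of the cyclic quotient singularity in $\mathbb{A}^{2}/\!/\mu_{d-1}$, while $D_{\alpha_2}$ and $D_{\alpha_3}$ (the strict transforms of $\{u=0\}$ and $\{u+v+v^d=0\}$, both $\mu_{d-1}$-invariant curves) descend to prime divisors $D'_{\alpha_2}$ and $D'_{\alpha_3}$ downstairs.

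The coefficients then follow from the definition of polyhedral pull-back $\varphi^{*}(\mathcal{D}')=\sum \Delta'_i\otimes\varphi^{*}(D'_i)$ in Section~\ref{sub:Maps-of-proper}. Since $\varphi_{\mu_{d-1}}$ is étale in codimension one away from $E$, one has $\varphi_{\mu_{d-1}}^{*}(D'_{\alpha_2})=D_{\alpha_2}$ and $\varphi_{\mu_{d-1}}^{*}(D'_{\alpha_3})=D_{\alpha_3}$, so the polyhedral coefficients $\{a/\alpha_2\}$ and $\{b/\alpha_3\}$ on $D'_{\alpha_3}$ and $D'_{\alpha_2}$ are unchanged. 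The only subtle point — and the step I expect to be the main obstacle — is the exceptional divisor: the quotient map $\tilde{\mathbb{A}}^2 \to \tilde{\mathbb{A}}^2/\!/\mu_{d-1}$ ramifies along $E$, which the ramification formula shows via $\varphi_{\mu_{d-1}}^{*}(E') = (d-1)E$, and the polyhedral divisor transforms accordingly so that the interval coefficient $[0,\frac{1}{\alpha_2\alpha_3}]$ upstairs becomes $[0,\frac{1}{(d-1)\alpha_2\alpha_3}]$ downstairs: indeed $[0,\tfrac{1}{(d-1)\alpha_2\alpha_3}]\otimes (d-1)E = [0,\tfrac{1}{\alpha_2\alpha_3}]\otimes E$ after rescaling the polyhedral coefficient by the ramification index. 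Collecting these identifications gives exactly $\mathcal{D}' = \{\frac{a}{\alpha_2}\}D'_{\alpha_3} + \{\frac{b}{\alpha_3}\}D'_{\alpha_2} + [0,\frac{1}{(d-1)\alpha_2\alpha_3}]E'$ with $\varphi_{\mu_{d-1}}^{*}(\mathcal{D}')=\mathcal{D}$, and Lemma~\ref{Effective} then yields the asserted $\mathbb{C}^{*}$-equivariant isomorphism $X\simeq\mathbb{S}(\tilde{\mathbb{A}}_{(u,v)}^{2}/\!/\mu_{d-1},\mathcal{D}')$, completing the proof.
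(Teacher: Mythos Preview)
Your proposal is correct and follows essentially the same approach as the paper: both apply Theorem~\ref{thm:Main_TH} (here in the effective form of Lemma~\ref{Effective}) to the presentation $(*)$ of $V$ to descend to $X=V/\!/\mu_{d-1}$. Your argument is in fact more detailed than the paper's, which simply writes ``Applying Theorem~\ref{thm:Main_TH} we obtain'' before stating the proposition; your explicit check that $\varphi_{\mu_{d-1}}^{*}(E')=(d-1)E$ is exactly what accounts for the factor $(d-1)$ in the interval coefficient.
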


\subsection{Koras Russell threefolds of the
second kind.}

For Koras-Russell threefolds of
the second kind $X=\{x+y(x^{d}+z^{\alpha_{^{2}}})^{l}+t^{\alpha_{3}}=0\}$
in $\mathbb{A}^{4}=\mathrm{Spec}(\mathbb{C}[x,y,z,t])$
the construction will be slightly
different due to the fact that
the variables $z$ and $t$ do
no longer play symmetric roles.
We will consider again a cyclic
cover $V$ of $X$, but in this
case $V/\!/\mu_{\alpha_{2}}$ will
not be isomorphic to $\mathbb{A}^{3}$.
Recall that by definition, $\alpha_{2}$
and $d$ are coprime. We consider
a bi-cyclic cover $V=\{x+y^{dl-1}(x^{d}+z^{d\alpha_{^{2}}})^{l}+t^{\alpha_{3}}=0\}$
of $X$ of order $d\times(dl-1)$,
which we decompose as a cyclic
cover $\phi_{d}:V\rightarrow V_{d}=\{x+y^{dl-1}(x^{d}+z^{\alpha_{^{2}}})^{l}+t^{\alpha_{3}}=0\}$
of degree $d$, followed by a cyclic
cover $\phi_{dl-1}:V_{d}\rightarrow X$
of degree $dl-1$. The hypersurface
$V$ is equipped with the $\mathbb{C}^{*}$-action
induced by the linear one on $\mathbb{A}^{4}$
with weights $(\alpha_{2}\alpha_{3},-\alpha_{2}\alpha_{3},\alpha_{3},\alpha_{2})$
and with the action of $\mu_{\alpha_{2}}\times\mu_{\alpha_{3}}\times\mu_{dl-1}\times\mu_{d}$
defined by $(\zeta,\epsilon,\xi,\delta)\cdot(x,y,z,t)=(x,\xi y,\zeta\delta z,\epsilon t)$.
The action of $\mu_{\alpha_{2}}\times\mu_{\alpha_{3}}$
on $V$ factors through that of
$\mathbb{C}^{*}$ and we obtain
the following diagram: 

\[
\xymatrix{ & V\ar[dr]^{\Phi_{\alpha_{3}}}\ar[dl]_{\Phi_{\alpha_{2}}}\ar[d]_{\Phi_{\mu_{d}}}\\
V_{\alpha_{2}}=V/\!/\mu_{\alpha_{2}} & V_{d}=V/\!/\mu_{d}\ar[d]_{\Phi_{\mu_{dl-1}}} & \mathbb{A}^{3}\simeq V_{\alpha_{3}}=V/\!/\mu_{\alpha_{3}}\\
 & X=V/\!/(\mu_{d}\times\mu_{dl-1}) & .
}
\]

By Theorem \ref{thm:Main_TH},
considering $\Phi_{\alpha_{3}}$,
we obtain that $Y(V)$ is isomorphic
to the blow-up $\tilde{\mathbb{A}}_{(u,v)}^{2}$
of $\mathbb{A}^{2}$ where $u=yz^{\alpha_{2}}$
and $v=yx$ on which $\mu_{\alpha_{2}}\times\mu_{\alpha_{3}}\times\mu_{dl-1}\times\mu_{d}$
acts by $(\zeta,\epsilon,\xi,\delta)\cdot(u,v)=(\xi\delta^{\alpha_{^{2}}}u,\xi v)$.
We obtain the following quotient
diagram:

\[
\xymatrix{ & Y(V)\ar[dr]^{\simeq}\ar[dl]_{\simeq}\ar[d]_{\varphi_{\mu_{d}}}\\
Y(V_{\alpha_{2}}) & Y(V_{d})\ar[d]_{\varphi_{\mu_{dl-1}}} & Y(V_{\alpha_{3}})\\
 & Y(X) & .
}
\]

Now by Proposition \ref{building block}
$V_{\alpha_{2}}=\mathbb{S}(\tilde{\mathbb{A}}_{(u,v)}^{2}$,
$\mathcal{D}_{\alpha_{2}}=\{\frac{1}{\alpha_{3}}\}D_{\alpha_{2}}+[0,\frac{1}{\alpha_{3}}]E)$
where $D_{\alpha_{2}}$ is the
strict transform of the curve $\left\{ v+(v{}^{d}+u^{d})^{l})=0\right\} $
and $E$ is the exceptional divisor,
and $V_{\alpha_{3}}=\mathbb{S}(\tilde{\mathbb{A}}_{(u,v)}^{2}$,
$\mathcal{D}_{\alpha_{3}}=\{\frac{1}{\alpha_{2}}\}D_{\alpha_{3}}+[0,\frac{1}{\alpha_{2}}]E)$
where $D_{\alpha_{3}}$ is the
strict transform of the curve $\left\{ u=0\right\} $
and $E$ is the exceptional divisor.
Thus $V=\mathbb{S}(\tilde{\mathbb{A}}_{(u,v)}^{2},\mathcal{D})$
for
\[
\mathcal{D}=\left\{ \frac{a}{\alpha_{2}}\right\} D_{\alpha_{3}}+\left\{ \frac{b}{\alpha_{3}}\right\} D_{\alpha_{2}}+\left[0,\frac{1}{\alpha_{2}\alpha_{3}}\right]E,
\]
where $E$ is the exceptional divisor
of $\pi:\tilde{\mathbb{A}}_{(u,v)}^{2}\rightarrow\mathbb{A}^{2}$,
and where $D_{\alpha_{2}}$ and
$D_{\alpha_{3}}$ are the respective
strict transforms of the curves
$\left\{ v+(v{}^{d}+u^{d})^{l})=0\right\} $
and $\left\{ u=0\right\} $ in
$\mathbb{A}^{2}$ and $(a,b)\in\mathbb{Z}^{2}$
$a\alpha_{3}+b\alpha_{2}=1$. Note
that the choice of $\mathcal{D}$
up to linear equivalence does not
depend of the choice on $(a,b)\in\mathbb{Z}^{2}$
. 

Now we deduce from Theorem \ref{thm:Main_TH}
that $V_{d}=\mathbb{S}(\tilde{\mathbb{A}}_{(u',v'^{d})}^{2},\mathcal{D}_{d})$
for 
\[
\mathcal{D}_{d}=\left\{ \frac{a'}{\alpha_{2}}\right\} D_{d,\alpha_{3}}+\left\{ \frac{b'}{\alpha_{3}}\right\} D_{d,\alpha_{2}}+\left[0,\frac{1}{\alpha_{2}\alpha_{3}}\right]E_{d}\;(**),
\]
where $a'=a/d$, $b'=b$, $E_{d}$
is the exceptional divisor of $\pi:\tilde{\mathbb{A}}_{(u',v'^{d})}^{2}\rightarrow\mathbb{A}^{2}$
due to the fact that $\tilde{\mathbb{A}}_{(u',v')}^{2}/\!/\mu_{d}\simeq\tilde{\mathbb{A}}_{(u',v'^{d})}^{2}$
for the action of $\mu_{d}$ as
above, and where $D_{d,\alpha_{2}}$
and $D_{d,\alpha_{3}}$ are the
strict transforms of the curves
$\left\{ v'+(u'+v'^{d})^{l})=0\right\} $
and $\left\{ u'=0\right\} $ (
$u'=\varphi_{d}(u^{d})$) in $\mathbb{A}^{2}=\mathrm{Spec}(\mathbb{C}[u',v'])$
respectively. Applying again Theorem
\ref{thm:Main_TH} we obtain :
\begin{prop}
A Koras-Russell threefold $X=\{x+y(x^{d}+z^{\alpha_{^{2}}})^{l}+t^{\alpha_{3}}=0\}$
in $\mathbb{A}^{4}=\mathrm{Spec}(\mathbb{C}[x,y,z,t])$
is isomorphic to $\mathbb{S}(\tilde{\mathbb{A}}_{(u',v'^{d})}^{2}/\!/\mu_{dl-1},\mathcal{D}_{d(dl-1)})$
for

\[
\mathcal{D}_{d(dl-1)}=\left\{ \frac{a'}{\alpha_{2}}\right\} D_{d(dl-1),\alpha_{3}}+\left\{ \frac{b'}{\alpha_{3}}\right\} D_{d(dl-1),\alpha_{2}}+\left[0,\frac{1}{(dl-1)\alpha_{2}\alpha_{3}}\right]E_{d(dl-1)},
\]
where $\mathcal{D}_{d}=\varphi_{\mu_{dl-1}}^{*}(\mathcal{D}_{d(dl-1)})$
, $\mathcal{D}_{d}$ is defined
in the relation $(**)$ and $E_{d(dl-1)}$
is the exceptional divisor of the
blow-up of the singularity in $\mathbb{A}^{2}/\!/\mu_{dl-1}$.\end{prop}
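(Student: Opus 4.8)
The plan is to deduce the proposition from Theorem~\ref{thm:Main_TH} applied to the variety $V_{d}$ together with the commuting actions of $\mathbb{C}^{*}$ and of $G=\mu_{dl-1}$, exactly as was done for the Russell cubic. We have already obtained the presentation $V_{d}=\mathbb{S}(\tilde{\mathbb{A}}^{2}_{(u',v'^{d})},\mathcal{D}_{d})$, with $\mathcal{D}_{d}$ the pp-divisor of $(**)$, on a base carrying the $\mu_{dl-1}$-action induced from the action $(\zeta,\epsilon,\xi,\delta)\cdot(u,v)=(\xi\delta^{\alpha_{2}}u,\xi v)$ on $Y(V)$, together with the identification $X=V_{d}/\!/\mu_{dl-1}$. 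Since $\mu_{dl-1}\times\mathbb{C}^{*}$ acts effectively on $V_{d}$ (the weights $\alpha_{2},\alpha_{3}$ being coprime) and the induced $\mu_{dl-1}$-action on $Y(V_{d})$ is non-trivial, we are in the situation of Lemma~\ref{Effective}: the corresponding map of pp-divisors has the form $(\varphi_{\mu_{dl-1}},\mathrm{id},1)$, so $\mathbb{T}'=\mathbb{T}$, and part~2) of Theorem~\ref{thm:Main_TH} provides a pp-divisor $\mathcal{D}'$ on $Y(V_{d})/\!/\mu_{dl-1}$ with $\varphi_{\mu_{dl-1}}^{*}(\mathcal{D}')=\mathcal{D}_{d}$ and $X\simeq\mathbb{S}(Y(V_{d})/\!/\mu_{dl-1},\mathcal{D}')$. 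It then remains to identify the base $Y(V_{d})/\!/\mu_{dl-1}$ and to put $\mathcal{D}'$ into the claimed normal form.

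For the base, I would first make the $\mu_{dl-1}$-action on $Y(V_{d})=\tilde{\mathbb{A}}^{2}_{(u',v'^{d})}$ explicit by carrying the scalar action $\xi\cdot(u,v)=(\xi u,\xi v)$ on $Y(V)$ through the commuting quotient by $\mu_{d}$; one obtains the action with weights $(d,1)$ on $(u',v')$, which is homogeneous of weight $d$ with respect to the ideal $(u',v'^{d})$ that is blown up. Consequently $Y(V_{d})/\!/\mu_{dl-1}$ is the blow-up of $\mathbb{A}^{2}_{(u',v')}/\!/\mu_{dl-1}$ along the image of that ideal, that is, the blow-up of the cyclic quotient singularity appearing in the statement, with exceptional divisor $E_{d(dl-1)}$. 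Because $\mu_{dl-1}$ acts by scalars on the directions normal to the exceptional divisor $E_{d}$ of $Y(V_{d})$, it fixes $E_{d}$ pointwise, and these normal scalars form a faithful character of $\mu_{dl-1}$; hence $\varphi_{\mu_{dl-1}}$ is totally ramified along $E_{d}$ with ramification index $dl-1$, so $\varphi_{\mu_{dl-1}}^{*}(E_{d(dl-1)})=(dl-1)E_{d}$. On the other hand the generic points of the strict transforms $D_{d,\alpha_{2}}$ and $D_{d,\alpha_{3}}$ are not $\mu_{dl-1}$-fixed (the action moves any point with $v'\neq0$), so $\varphi_{\mu_{dl-1}}^{*}(D_{d(dl-1),\alpha_{2}})=D_{d,\alpha_{2}}$ and $\varphi_{\mu_{dl-1}}^{*}(D_{d(dl-1),\alpha_{3}})=D_{d,\alpha_{3}}$.

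Combining these computations, the polyhedral pull-back of the candidate
\[
\mathcal{D}_{d(dl-1)}=\left\{\frac{a'}{\alpha_{2}}\right\}D_{d(dl-1),\alpha_{3}}+\left\{\frac{b'}{\alpha_{3}}\right\}D_{d(dl-1),\alpha_{2}}+\left[0,\frac{1}{(dl-1)\alpha_{2}\alpha_{3}}\right]E_{d(dl-1)}
\]
equals $\{a'/\alpha_{2}\}D_{d,\alpha_{3}}+\{b'/\alpha_{3}\}D_{d,\alpha_{2}}+[0,\frac{1}{\alpha_{2}\alpha_{3}}]E_{d}$, since taking the $(dl-1)$-fold Minkowski multiple of the segment $[0,\frac{1}{(dl-1)\alpha_{2}\alpha_{3}}]$ gives $[0,\frac{1}{\alpha_{2}\alpha_{3}}]$ while the one-point coefficients attached to the unramified divisors are unchanged; this is exactly the $\mathcal{D}_{d}$ of $(**)$. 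As the pp-divisor $\mathcal{D}'$ produced by Theorem~\ref{thm:Main_TH} is characterised, up to the linear equivalence that leaves the chosen normal form unaffected, by the relation $\varphi_{\mu_{dl-1}}^{*}(\mathcal{D}')=\mathcal{D}_{d}$, we may take $\mathcal{D}'=\mathcal{D}_{d(dl-1)}$, which proves the proposition. The only extra point to check is that $\mathcal{D}_{d(dl-1)}$ genuinely is a pp-divisor on the possibly singular surface $Y(V_{d})/\!/\mu_{dl-1}$ ($\mathbb{Q}$-Cartierness and semi-ampleness of its evaluations), but this is automatic from the existence statement in Theorem~\ref{thm:Main_TH} once the pull-back has been identified.

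The step I expect to be the main obstacle is the second one: keeping accurate track of the $\mu_{dl-1}$-action along the tower $V\to V_{d}$, $Y(V)\to Y(V_{d})$ and its quotient, and then proving precisely that $E_{d(dl-1)}$ lies in the branch locus of $\varphi_{\mu_{dl-1}}$ with full ramification $dl-1$ whereas $D_{d(dl-1),\alpha_{2}}$ and $D_{d(dl-1),\alpha_{3}}$ do not. This is what forces the coefficient of the exceptional segment to pass from $[0,\frac{1}{(dl-1)\alpha_{2}\alpha_{3}}]$ downstairs to $[0,\frac{1}{\alpha_{2}\alpha_{3}}]$ upstairs while the fractional coefficients on the other two divisors stay the same, and it is what produces the quotient singularity in the base; describing that partial resolution of $\mathbb{A}^{2}/\!/\mu_{dl-1}$ explicitly is the other slightly delicate point.
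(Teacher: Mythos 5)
Your proposal is correct and follows essentially the same route as the paper, which simply invokes Theorem~\ref{thm:Main_TH} (via Lemma~\ref{Effective}) to descend $\mathcal{D}_{d}$ from $(**)$ along the quotient $\varphi_{\mu_{dl-1}}$. In fact you supply the details the paper leaves implicit — the weight-$(d,1)$ action on $(u',v')$, the total ramification of index $dl-1$ along $E_{d}$ versus the unramified behaviour along $D_{d,\alpha_{2}}$ and $D_{d,\alpha_{3}}$ — which is exactly what turns the segment coefficient $\left[0,\tfrac{1}{\alpha_{2}\alpha_{3}}\right]$ into $\left[0,\tfrac{1}{(dl-1)\alpha_{2}\alpha_{3}}\right]$ while leaving the point coefficients unchanged.
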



\begin{thebibliography}{References}
\bibitem[1]{A-H} K. Altmann, J.
Hausen , Polyhedral divisors and
algebraic torus actions. Math.
Ann. 334, 557-607 (2006).

\bibitem[2]{D} M. Demazure, Anneaux
gradués normaux, Introduction à
la théorie des singularités, II,
Travaux en Cours, vol. 37, 35\textendash{}68
(1988).

\bibitem[3]{F-Z} H. Flenner, M.
Zaidenberg, Normal affine surfaces
with $\mathbb{C}^{*}$-actions.
Osaka J. Math. 40, no. 4, 981\textendash{}1009
(2003). 

\bibitem[4]{I-V} N. Ilten, R.
Vollmert, Upgrading and Downgrading
Torus Actions. J. Pure Appl. Algebra
217 no. 9 (2013).

\bibitem[5]{K-R} M. Koras,P. Russell,
Contractible threefolds and $\mathbb{C}^{*}$-actions
on $\mathbb{C}^{3}$, J. Algebraic
Geom., 6 , 671\textendash{}695
(1997)

\bibitem[6]{Ka-ML} S. Kaliman,
L. Makar-Limanov, On the Russell-Koras
contractible threefolds , J. Algebraic
Geom., 6 no. 2 , 247\textendash{}268
(1997).

\bibitem[7]{Ka-K-ML-R} S. Kaliman,
M. Koras, L. Makar-Limanov, P.
Russell, $\mathbb{C}^{*}$-actions
on $\mathbb{C}^{3}$ are linearizable,
Electron. Res. Announc. Amer. Math.
Soc. 3, 63-71 (1997).

\bibitem[8]{T} M. Thaddeus, Geometric
invariant theory and flips, J.
Amer. Math. Soc. 9 (1996), 691-723 \end{thebibliography}
\end{document}